\pgfplotsset{compat=1.15}
\theoremstyle{plain}
\newtheorem{thm}{Theorem}
\newtheorem{lemme}[thm]{Lemma}
\newtheorem{prop}[thm]{Proposition}
\newtheorem{cor}[thm]{Corollary}
\theoremstyle{definition}
\theoremstyle{remark}
\newtheorem*{rem}{Remark}
\date{2021}
\begin{document}

\title{Asymptotic behaviour of the first positions of uniform parking functions}
\author{Etienne Bellin\footnote{etienne.bellin@polytechnique.edu}
\\
{\normalsize CMAP - Ecole Polytechnique}

}

\maketitle

\begin{abstract}
    In this paper we study the asymptotic behavior of a random uniform parking function $\pi_n$ of size $n$. We show that the first $k_n$ places $\pi_n(1),\dots,\pi_n(k_n)$ of $\pi_n$ are asymptotically i.i.d. and uniform on $\{1,2,\dots,n\}$, for the total variation distance when $k_n = o(\sqrt{n})$, and for the Kolmogorov distance when $k_n=o(n)$, improving results of Diaconis \& Hicks. Moreover we give bounds for the rate of convergence, as well as limit theorems for some statistics like the sum or the maximum of the first $k_n$ parking places. The main tool is a reformulation using conditioned random walks.
\end{abstract}

\section{Introduction}

A \textit{parking function} of size $n$ is a function $\pi_n:\llbracket 1,n \rrbracket \to \llbracket 1,n \rrbracket$ such that, if $\pi_n'(1) \leq \dots \leq \pi_n'(n)$ is the nondecreasing rearrangement of $(\pi_n(1),\dots,\pi_n(n))$, then $\pi_n'(i) \leq i$ for all $i$. Konheim and Weiss \cite{konheim} first introduced parking functions, in a context of information storing, to study hashing functions and they have shown that there are $(n+1)^{n-1}$ parking functions of size $n$. Since then, parking functions
became a subject of interest in the fields of combinatorics, probability, group theory and computer science. More precisely, parking functions are linked to the enumerative theory of trees and forests \cite{chassaing}, to coalescent processes \cite{louchard} \cite{broutin}, to the analysis of set partitions \cite{stanley1997}, hyperplane arrangements \cite{stanley1998} \cite{shi}, polytopes \cite{pitman} \cite{chebikin} and sandpile groups \cite{cori}. Finally, the study of probabilistic properties of parking functions has recently attracted some interest \cite{diaconis} \cite{kenyon} \cite{yin}. We refer to \cite{yan} for an extensive survey. Our initial interest for parking functions comes from the study of minimal factorisations of cycles \cite{biane2002}. 

For all $n\geq 1$ consider a random parking function $(\pi_n(i))_{1 \leq i \leq n}$ chosen uniformly among all the $(n+1)^{n-1}$ possible parking functions of size $n$. For all $1 \leq k \leq n$ denote by
\begin{equation}
\label{dtv}
d_{TV}(k,n) := \sum_{i_1,\dots,i_k=1}^n \left| \mathbb{P}(\pi_n(1)=i_1,\dots,\pi_n(k)=i_k) - \frac{1}{n^k} \right|    
\end{equation}
the total variation distance between $(\pi_n(1),\dots,\pi_n(k))$ and $(U_n(1),\dots,U_n(k))$ where $(U_n(i))_{1 \leq i \leq n}$ are i.i.d. uniformly distributed in $\llbracket 1,n \rrbracket$. Diaconis and Hicks \cite[Corollary 6]{diaconis} have shown that $d_{TV}(1,n)$ tends to 0 as $n$ tends to infinity and conjectured that for any fixed $k$, $d_{TV}(k,n)$ should be a $O(k/\sqrt{n})$. In the same paper the authors studied the Kolmogorov distance
\begin{equation}
\label{dkol}
d_K(k,n) := \max_{1 \leq i_1 \dots i_k \leq n} \left| \mathbb{P}(\pi_n(1)\leq i_1,\dots,\pi_n(k) \leq i_k) - \frac{i_1\dots i_k}{n^k}  \right|    
\end{equation}
and have shown that \cite[Theorem 3]{diaconis} for $1\leq k \leq n$:
$$
d_K(k,n) = O\left( k\sqrt{\frac{\log{n}}{n}} + \frac{k^2}{n} \right).
$$
They also discuss the growth threshold of $k$ at which $d_K$ doesn't converge towards 0 anymore and find that for $k$ of order $n$ the convergence fails. We prove a stronger version of Diaconis' and Hicks' conjecture when $k$ is allowed to grow with $n$ at rate at most $\sqrt{n}$. Moreover the Kolmogorov distance converges towards 0 when $k = o(n)$, namely:
\begin{thm}
\label{main thm}
\begin{enumerate}[(i)]
    \item If $k_n = o(\sqrt{n})$ then
\begin{equation}
\label{goal}
d_{TV}(k_n,n) = O \left( \frac{k_n}{\sqrt{n}} \right).    
\end{equation}
    \item If $k_n = o(n)$ and $\sqrt{n} = o(k_n)$ then
\begin{equation}
\label{goalii}
d_K(k_n,n) = O\left( \frac{\sqrt{n}}{k_n} + \left(\frac{k_n}{n}\right)^{0.19} \right).
\end{equation}
\end{enumerate}
\end{thm}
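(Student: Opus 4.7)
Proof proposal.

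Matching the abstract's hint, I would encode $\pi_n$ through its composition $(N_1,\dots,N_n)$ with $N_i = \#\{j : \pi_n(j)=i\}$. A direct Poisson-conditioning computation identifies $(N_1,\dots,N_n)$ in law with $(\xi_1,\dots,\xi_n)$ conditionally on the parking event $A = \{\sum_i \xi_i = n\} \cap \{S_j \ge 0\ \forall j\}$, where the $\xi_i$ are iid Poisson$(1)$ and $S_j = \sum_{i \le j}(\xi_i - 1)$; equivalently, $(N_i)$ is a centred Poisson bridge conditioned to remain non-negative. Given the composition, $(\pi_n(1),\dots,\pi_n(n))$ is a uniform random arrangement of the multiset with $N_i$ copies of $i$.

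For part (i), I would apply the triangle inequality against an intermediate law $(W_1,\dots,W_k)$ that is iid with probabilities $(N_j/n)$ conditionally on the composition. Conditionally on $(N_i)$, sampling $k$ entries without replacement from the multiset differs in TV from iid sampling by at most $\binom{k}{2}/n = O(k^2/n)$, uniformly in $(N_i)$, since this is just the probability that $k$ uniform random \emph{slots} in $\{1,\dots,n\}$ fail to be distinct. The TV distance between $(W_l)$ and a truly iid uniform sequence then equals
\begin{equation*}
    \frac{1}{2n^k}\sum_{i_1,\dots,i_k=1}^n\bigl|\mathbb{E}[N_{i_1}\cdots N_{i_k}]-1\bigr|.
\end{equation*}
The key tool is the Poisson size-bias identity $\mathbb{E}[\xi_j f(\xi)] = \mathbb{E}[f(\xi + e_j)]$, which iterated yields
\begin{equation*}
    \mathbb{E}[N_{i_1}\cdots N_{i_k}] = \frac{\mathbb{P}\bigl(\sum_j \xi_j = n-k,\ S_j \ge -M_j\ \forall j\bigr)}{\mathbb{P}(A)},\qquad M_j := \#\{l : i_l \le j\}.
\end{equation*}
Both numerator and denominator are probabilities of explicit walk events estimable by local CLT together with reflection/cycle-lemma arguments: relaxing the barrier from $0$ to $-M_j$ contributes a correction of order $M_j/\sqrt n$ by Brownian-excursion scaling. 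Summing carefully over $(i_l)$ yields $O(k/\sqrt n)$, which combined with the birthday term gives the announced $O(k_n/\sqrt n)$ whenever $k_n = o(\sqrt n)$.

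For part (ii), the cumulative marginal deviation renders the TV bound useless once $k \gtrsim \sqrt n$, so I would pass to the Kolmogorov distance, which only probes box events. By exchangeability of $(\pi_n(1),\dots,\pi_n(k))$, we may assume $i_1 \le \cdots \le i_k$. Conditionally on $(N_i)$, $\mathbb{P}(\pi_n(l) \le i_l\ \forall l \mid N)$ then equals, up to a birthday correction of order $k^2/n$, the product $\prod_l (i_l + S_{i_l})/n$ of rescaled prefix sums, so the CDF deviation from $\prod_l i_l/n$ is governed entirely by the joint law of $(S_{i_1},\dots,S_{i_{k_n}})$ in the conditioned walk. Functional CLT convergence of the rescaled conditioned walk to the normalised Brownian excursion gives $S_j$ stochastically of order $\sqrt n$, which after careful summation produces the $\sqrt n/k_n$ term; the $(k_n/n)^{0.19}$ contribution encodes the residual joint dependence of the prefix sums and very likely reflects the lossy quantitative CLT used rather than a fundamental barrier.

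The main obstacle is the quantitative analysis of the relaxed-walk probabilities appearing in the size-bias identity, uniformly in $(i_l)$: when several of the $i_l$ cluster together or fall near the endpoints $1$ and $n$, the combined effect of the step barrier $M_j$ and the excursion-type conditioning is delicate, and getting the correct $O(k/\sqrt n)$ rate requires a careful combination of local CLT, reflection, and Kemperman-type cycle-lemma identities for the Poisson walk.
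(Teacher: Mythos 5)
Your reduction is sound and genuinely different from the paper's: encoding $\pi_n$ by its composition $(N_1,\dots,N_n)$, identifying it with a Poisson vector conditioned on $\{\sum_i\xi_i=n,\ S_j\ge 0\}$, and using the size-bias identity to write $\mathbb{E}[N_{i_1}\cdots N_{i_k}]$ as a ratio of barrier-shifted walk probabilities is a legitimate alternative to the paper's route (Chassaing--Marckert bijection plus a monotonicity/rearrangement lemma comparing $\mathbb{E}_n[X_{i_1}\cdots X_{i_k}]$ with $\mathbb{E}_n[(S_{i_1}+i_1)\cdots(S_{i_k}+i_k)]$). But for part (i) the actual content is missing. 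The assertion that relaxing the barrier from $0$ to $-M_j$ "contributes a correction of order $M_j/\sqrt n$" is unproved and, read pointwise, false: for tuples whose indices cluster near $1$ (say $i_l=l$), the conditioning inflates the early steps by factors of order $1+c/\sqrt{i_l}$, so $\mathbb{E}[N_{i_1}\cdots N_{i_k}]$ can deviate from $1$ by something like $e^{c\sqrt{k}}$, not $1+O(k/\sqrt n)$; only the \emph{average} over tuples is $O(k/\sqrt n)$, and establishing that average bound uniformly (including clustered indices, indices near the endpoints, and repeated values, for which your size-bias iteration yields factorial-moment corrections rather than the clean shifted event) is precisely where the paper's work lies: reduction to distinct indices, the monotonicity lemma, and the conditioned-walk moment bounds $\mathbb{E}_n[S_i^d]\le (Cd\sqrt i)^d$ proved via the cycle lemma, Janson's local estimate and Poisson tail bounds. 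You flag this as "the main obstacle" but give no argument, so (i) remains an outline.

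Part (ii) contains a step that actually fails. You pass from sampling without replacement to iid sampling from the empirical measure at a cost of $\binom{k}{2}/n$, but in the regime of (ii) one has $\sqrt n=o(k_n)$, so $k^2/n\to\infty$ (e.g.\ $k=n^{3/4}$ gives error $\sqrt n$) and the bound is vacuous. The paper's essential ingredient at exactly this point is Bobkov's theorem for exchangeable sequences, which bounds the box-event (Kolmogorov-type) discrepancy between $(\pi_n(1),\dots,\pi_n(k))$ and the iid sample from the empirical distribution by $Ck/n$; without this (or a substitute, e.g.\ a hypergeometric/negative-dependence argument giving $O(k/n)$ for CDF events), your route collapses for $k\gg\sqrt n$. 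Moreover, the announced rate cannot come from a functional CLT statement that "$S_j$ is of order $\sqrt n$": the $\sqrt n/k_n$ term comes from the bulk indices combined with a geometric factor $((n-k)/n)^{j-1}$, and the $(k_n/n)^{0.19}$ term requires the quantitative endpoint bound $\mathbb{E}_n[S_{n-j}]\le C\bigl(\tfrac{n}{n-j}\bigr)^{3/2}\sqrt j$ together with a doubly-exponential multiscale decomposition of the indices in $[n-k,n]$ and an optimization over the scale parameter; nothing in your sketch produces a rate there.
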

\begin{rem}
In Theorem \ref{main thm} (ii), $\sqrt{n}$ is assumed to be a $o(k_n)$. Since the function $k \mapsto d_K(k,n)$ is nondecreasing for fixed $n$, the distance $d_K(k_n,n)$ still tends towards 0 as long as $k_n = o(n)$. Thus, sequence $a_n = n$ satisfies $d_K(k_n,a_n) \to 0$ if $k_n = o(a_n)$ and $d_K(k_n,a_n) \not\to 0$ if $a_n = O(k_n)$. It would be very interesting to identify such a sequence for $d_{TV}$ instead of $d_K$, and, in particular, to see if $d_{TV}(k_n,n) \to 0$ when $k_n=o(n)$.
\end{rem}
The main idea to prove Theorem \ref{main thm} is to express the law of $\pi_n$ in terms of a conditioned random walk (Proposition \ref{reformulating prop} below). As an application, we obtain limit theorems for the maximum and the sum of the first $k_n$ parking places. Namely we obtain the following corollary (whose proof is postponed to the last section):
\begin{cor}
\label{cor of main thm}
\begin{enumerate}[(i)]
    \item If $k_n = o(\sqrt{n})$ and $k_n \to \infty$ then the convergence
$$
\sqrt{\frac{12}{k_n}}\left(\frac{\pi_n(1)+\dots+\pi_n(k_n)}{n}-\frac{k_n}{2}\right) \longrightarrow \mathcal{N}(0,1)
$$
holds in distribution where $\mathcal{N}(0,1)$ is a standard normal distribution.
    \item If $k_n = o(n)$ and $k_n \to \infty$ then the convergence
$$
k_n \left( 1 - \frac{1}{n}\max \{ \pi_n(1),\dots,\pi_n(k_n) \} \right) \longrightarrow \mathcal{E}(1)
$$
holds in distribution where $\mathcal{E}(1)$ is an exponential distribution with mean 1.
\end{enumerate}
\end{cor}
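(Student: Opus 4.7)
The plan is to use Theorem~\ref{main thm} to reduce each part of the corollary to the analogous statement for i.i.d.\ uniform random variables $(U_n(i))_{1 \leq i \leq k_n}$ on $\llbracket 1, n\rrbracket$, which is then classical. For (i), since $k_n = o(\sqrt{n})$, Theorem~\ref{main thm}(i) gives $d_{TV}(k_n, n) \to 0$, hence there exists a coupling under which $(\pi_n(1), \dots, \pi_n(k_n))$ and $(U_n(1), \dots, U_n(k_n))$ coincide with probability $1 - o(1)$, and the two sums agree on that event. It therefore suffices to prove the CLT for $V_n := \sum_{i=1}^{k_n} U_n(i)$. A direct computation gives $\mathbb{E}[U_n(1)] = (n+1)/2$ and $\mathrm{Var}(U_n(1)) = (n^2-1)/12$, and since $k_n \to \infty$ the standard CLT for triangular arrays of i.i.d.\ bounded variables yields
$$
\frac{V_n - k_n(n+1)/2}{\sqrt{k_n (n^2-1)/12}} \;\xrightarrow{d}\; \mathcal{N}(0, 1);
$$
the claimed form then follows by rescaling and absorbing the $O(1/n)$ corrections (using $k_n = o(\sqrt{n})$).

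For (ii), the key observation is that the event $\{\max_{1 \leq i \leq k_n} \pi_n(i) \leq j\}$ has exactly the product form appearing in~\eqref{dkol} with $i_1 = \dots = i_{k_n} = j$, so
$$
\Bigl| \mathbb{P}\bigl(\max_{1 \leq i \leq k_n} \pi_n(i) \leq j\bigr) - (j/n)^{k_n} \Bigr| \;\leq\; d_K(k_n, n),
$$
uniformly in $j \in \llbracket 1, n\rrbracket$. I first need to know that $d_K(k_n, n) \to 0$ for every $k_n = o(n)$: Theorem~\ref{main thm}(ii) covers the regime $\sqrt{n} = o(k_n)$, while the trivial bound $d_K \leq d_{TV}$ combined with Theorem~\ref{main thm}(i) covers $k_n = o(\sqrt{n})$; for arbitrary $k_n = o(n)$ one invokes the monotonicity of $k \mapsto d_K(k, n)$ noted in the Remark above to dominate $d_K(k_n, n)$ by $d_K(k_n', n)$ for some $k_n' \geq k_n$ lying in one of the two controlled regimes. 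Fixing $x > 0$ and setting $j_n := \lfloor n(1 - x/k_n)\rfloor$, an expansion of $k_n \log(j_n/n)$ together with $k_n \to \infty$ and $k_n = o(n)$ yields $(j_n/n)^{k_n} \to e^{-x}$, so the previous display gives $\mathbb{P}\bigl(k_n(1 - \max_i \pi_n(i)/n) > x\bigr) \to e^{-x}$, which is the survival function of $\mathcal{E}(1)$.

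The principal obstacle, modest as it is, sits in part (ii): extending $d_K(k_n, n) \to 0$ to the full range $k_n = o(n)$ by the monotonicity argument above, and tracking the integer truncations that relate the rectangle event $\{\max \leq j_n\}$ to $\{k_n(1 - \max/n) > x\}$ up to negligible corrections. Once these bookkeeping points are settled, both parts are applications of well-known limit theorems for i.i.d.\ uniforms.
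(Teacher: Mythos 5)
Your proposal is correct and follows essentially the same route as the paper: part (i) is the classical CLT for i.i.d.\ uniforms transferred via the total variation bound of Theorem~\ref{main thm}(i), and part (ii) is the explicit product-form computation for the maximum transferred via the Kolmogorov distance, with the extension of $d_K(k_n,n)\to 0$ to all $k_n=o(n)$ by the monotonicity of $k\mapsto d_K(k,n)$ exactly as in the paper's Remark. Your extra bookkeeping (maximal coupling in (i), integer truncation in (ii)) is sound and only makes explicit what the paper leaves implicit.
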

\begin{rem}
The complete sum $\pi_n(1)+\dots+\pi_n(n)$ has been studied and converges, after renormalization, towards a more complicated distribution involving zeros of the Airy function (see \cite[Theorem 14]{diaconis}).
\end{rem}

When $k_n \sim cn$ we obtain the following limit theorem for the first $k_n$ parking places. The proof uses other techniques and Proposition \ref{reformulating prop} (or rather its proof).

\begin{prop}
\label{prop extension max}
If $k_n \sim cn$ with $c \in (0,1]$ then for all $a \in \mathbb N$ there exists a integer-valued random variable $S_a^*$ such that $0 \leq S_a^* \leq a$ almost surely and
$$
\mathbb P (n-\max\{\pi_n(1),\dots,\pi_n(k)\} \geq a) \longrightarrow \mathbb E \left[(1-c)^{a - S_a^*}\right].
$$
\end{prop}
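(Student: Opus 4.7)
The plan is to condition on the multiplicity vector $(b_1,\dots,b_n)$ with $b_j := \#\{i\in\llbracket 1,n\rrbracket : \pi_n(i)=j\}$ and use the random walk encoding of Proposition~\ref{reformulating prop} to identify the limit. Given $(b_j)$, the parking function $\pi_n$ is a uniformly random arrangement of the multiset containing $b_j$ copies of $j$ (since the parking condition depends only on multiplicities). Writing $M_n := \sum_{j=n-a+1}^{n} b_j$ for the number of indices $i$ with $\pi_n(i)>n-a$, a standard sampling-without-replacement computation gives
$$
\mathbb{P}\bigl(\max\{\pi_n(1),\dots,\pi_n(k_n)\}\leq n-a \,\bigm|\, (b_j)\bigr)
= \prod_{i=0}^{k_n-1} \frac{n-M_n-i}{n-i}
= \prod_{i=0}^{M_n-1} \frac{n-k_n-i}{n-i}.
$$
The parking condition says that the walk $S_j := b_1+\dots+b_j - j$ stays nonnegative and returns to $0$ at time $n$, hence the telescoping identity $M_n = a - S_{n-a}$, with $S_{n-a}$ deterministically bounded in $\{0,1,\dots,a\}$.

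When $k_n\sim cn$ and $M_n\leq a$, each factor in the rightmost product tends to $1-c$ and there are at most $a$ factors, so the product converges to $(1-c)^{M_n}$ uniformly in the realization of the walk. Taking expectations,
$$
\mathbb{P}\bigl(n-\max\{\pi_n(1),\dots,\pi_n(k_n)\}\geq a\bigr)
= \mathbb{E}\bigl[(1-c)^{a-S_{n-a}}\bigr] + o(1),
$$
so the statement reduces to producing a $\{0,\dots,a\}$-valued limit $S_a^*$ of $S_{n-a}$ in distribution; dominated convergence (the integrand is bounded by $1$) then concludes.

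To establish this distributional limit I would use time reversal. By Proposition~\ref{reformulating prop} (or rather its proof), the walk $(S_j)_{0\leq j\leq n}$ is built from a random walk with i.i.d.\ centred integer increments supported on $\{-1,0,1,\dots\}$ (namely $b_j-1$), conditioned to stay nonnegative with $S_n=0$. The reversed process $T_j:=S_{n-j}$ admits a bridge-type description of the same nature and satisfies $S_{n-a}=T_a$. For fixed $a$, the endpoint conditioning $T_n=0$ becomes asymptotically independent of $(T_1,\dots,T_a)$---a standard consequence of the local limit theorem together with the $n^{-1/2}$ decay of the nonnegativity survival probability---so $(T_1,\dots,T_a)$ converges in law to the restriction of the walk conditioned only to stay nonnegative forever, i.e.\ the Doob $h$-transform of the step distribution by the harmonic function $h(x)=\mathbb{P}_x(\tau_{-1}=\infty)$. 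One then defines $S_a^*$ as the value at time $a$ of this limiting Markov chain.

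The genuinely delicate step is this last one: transferring the asymptotic-independence argument from the specific bridge/excursion of Proposition~\ref{reformulating prop} to an $h$-transform limit. This is classical in the literature on random walks conditioned to remain nonnegative, but requires the sharp local-limit and fluctuation estimates for the increment distribution of the encoding. These estimates are precisely the tools already developed in the earlier sections to prove Theorem~\ref{main thm}, so the ingredients are in place; the proof is then a matter of packaging them together.
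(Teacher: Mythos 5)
Your first half is correct and is in substance the same reduction as the paper's: conditionally on the multiplicity vector the parking function is an exchangeable arrangement, and your product $\prod_{i=0}^{M_n-1}\tfrac{n-k-i}{n-i}$ with $M_n=a-S_{n-a}$ is exactly the identity $\mathbb P(\pi_n(1),\dots,\pi_n(k)\le n-a)=\tfrac{(n-k)!}{n!}\,\mathbb E_n[(S_{n-a}+n-a)_k]$ that the paper obtains by rerunning the computation of Proposition \ref{reformulating prop}; your uniform replacement of the product by $(1-c)^{a-S_{n-a}}$ is also the paper's step, so both proofs reduce the claim to convergence in law of $S_{n-a}$ under $\mathbb P_n$, a variable confined to the fixed finite set $\{0,\dots,a\}$. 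Where you genuinely diverge is in establishing that convergence. The paper reads $S$ under $\mathbb P_n$ as the \L ukasiewicz path of a conditioned Poisson BGW tree, notes that $S_{n-a}$ is a function of the subtree spanned by the last $a+1$ vertices in lexicographic order, and simply quotes local convergence to Kesten's tree, which yields an explicit $S_a^*$ (edges among the last $a+1$ vertices, to the right of the spine) with no new estimates. You instead time-reverse the walk and appeal to the classical statement that a nonnegative bridge converges locally near its endpoint to the walk conditioned to stay nonnegative; this is a legitimate alternative --- indeed it is the same fact in random-walk rather than tree language --- but two points need repair. First, the harmonic function you name is wrong: for the centred walk $\mathbb P_x(\tau_{-1}=\infty)=0$, so the Doob transform must be taken with the renewal function of the relevant ladder heights (the function harmonic for the walk killed upon entering the negative half-line, in the Bertoin--Doney sense), which in the tree picture is precisely the right-of-spine description of Kesten's tree. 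Second, your claim that the estimates already proved in the paper suffice is optimistic: Sections 3--4 only give one-sided bounds such as $\mathbb P(S_m=x,\,S_1,\dots,S_m\ge 0)\le Cxm^{-3/2}$, whereas the asymptotic-independence step requires matching asymptotics $\mathbb P(S_m=x,\,S_1,\dots,S_m\ge 0)\sim \kappa(1+x)m^{-3/2}$, uniformly for bounded $x$ (obtainable via Kemperman's formula plus the local limit theorem, or from Vatutin--Wachtel/Caravenna--Chaumont type results). So your route goes through, but only after importing or proving those sharp local estimates --- exactly the technical burden the paper sidesteps by citing Kesten-tree local convergence.
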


In section \ref{sec bij} we use a bijection between parking functions and Cayley trees and use it to reformulate the law of $\pi_n$ in terms of conditioned random walks. Then in section \ref{sec conv for tv dist} we bound the moments of a conditioned random walk in order to control the probability mass function of $\pi_n$ and prove Theorem \ref{main thm} (i). In section \ref{sec conv for k dist} we prove (ii) using arguments developed in the previous sections. Finally the last section is devoted to the proof of Corollary \ref{cor of main thm} and Proposition \ref{prop extension max}.

\medskip

\textbf{In the following, $C$ denotes a constant which may vary from line to line.}

\section{Bijection between parking functions and Cayley trees} \label{sec bij}

Here the goal is to use the bijection found by Chassaing and Marckert in \cite{chassaing} between parking functions of size $n$ and \textit{Cayley trees} with $n+1$ vertices (i.e. acyclic connected graphs with $n+1$ vertices labeled from 0 to $n$). This bijection will allow to express the joint distribution of the first positions of a uniform parking function in terms of random walks. To this end, we start with some notation and definitions. Let $\mathfrak{C}_{n+1}$ be the set of Cayley trees with $n+1$ vertices labeled from 0 to $n$ where the vertex labeled 0 is distinguished from the others (we call it the \textit{root} of the tree). Also let $P_n$ be the set of parking functions of size $n$. We consider the \textit{breadth first search} on a tree $t \in \mathfrak{C}_{n+1}$ by ordering the children of each vertex of $t$ in the increasing order of their labels (thus $t$ is viewed as a plane tree) and then taking the regular breadth first search associated to the plane order (see \cite{chassaing} for a detailed definition and see figure \ref{figure breadth first search} for an example). For $t \in \mathfrak{C}_{n+1}$ and $1 \leq i \leq n$, define $r(i,t)$ to be the rank for the breadth first search of the parent of the vertex labeled $i$ in $t$. The bijection of Chassaing and Marckert is described in the following theorem.
\begin{thm}[Chassaing and Marckert]
\label{thm bij}
The map
\begin{equation}
\label{bijection}
t \mapsto (r(1,t),\dots,r(n,t)) 
\end{equation}
is a bijection between $\mathfrak{C}_{n+1}$ and $P_n$. 
\end{thm}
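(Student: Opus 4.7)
The plan is to establish the bijection by constructing an explicit inverse map, with the BFS queue serving as the bridge between the tree and the parking function inequality. I would first check that $t \mapsto (r(1,t),\dots,r(n,t))$ always lands in $P_n$, then describe an algorithm that reads back a tree from any parking function, and observe that the two algorithms are mutual inverses.

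For the forward direction, enumerate the vertices of $t$ in BFS order as $v_1, v_2, \dots, v_{n+1}$ (with $v_1 = 0$), and let $c_j$ be the number of children of $v_j$. After processing $v_1, \dots, v_j$ the BFS queue contains exactly $1 + \sum_{\ell=1}^{j} c_\ell - j$ vertices, which must be positive as long as $j \leq n$. Hence $\sum_{\ell=1}^{j} c_\ell \geq j$ for all such $j$. Since $\sum_{\ell=1}^{j} c_\ell$ equals the number of vertices $i \in \llbracket 1,n \rrbracket$ whose parent has BFS rank at most $j$, this reads $\#\{i : r(i,t) \leq j\} \geq j$, which is exactly the defining inequality of the parking function $(r(1,t),\dots,r(n,t))$ after rearrangement in nondecreasing order.

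For the inverse, given $(r_1, \dots, r_n) \in P_n$, set $v_1 = 0$ and process $j = 1, 2, \dots$ in turn: declare the children of $v_j$ in the tree to be the vertices $i \in \llbracket 1,n \rrbracket$ with $r_i = j$, ordered by increasing label, and append them to the running BFS list. For the algorithm to read off a next vertex $v_{j+1}$ whenever $j \leq n$, the list must contain at least $j+1$ entries after $v_j$ has been processed, i.e.\ $\#\{i : r_i \leq j\} \geq j$ --- which is precisely the parking function condition. The output is then a well-defined rooted labeled tree on $n+1$ vertices whose BFS enumeration coincides by construction with the prescribed $v_1, \dots, v_{n+1}$, and whose image under the forward map is $(r_1, \dots, r_n)$.

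The two constructions are mutually inverse, yielding the bijection. The main subtlety is the inverse step: one must verify that the inequality required to keep the BFS list from running out at each stage is exactly the parking function inequality, and that appending the $r_i = j$ vertices (in increasing label order) to the end of the list faithfully implements the plane-tree BFS convention of the paper. Since $|\mathfrak{C}_{n+1}| = |P_n| = (n+1)^{n-1}$, one could alternatively shortcut the argument by proving only injectivity of the forward map and invoking a counting argument, but the explicit inverse is conceptually useful for the later probabilistic reformulation.
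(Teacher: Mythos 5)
Your argument is correct, but note that the paper itself does not prove this statement: it is quoted as a theorem of Chassaing and Marckert and used as a black box, with the reader referred to \cite{chassaing}. So there is no internal proof to compare against; what you have written is essentially the standard argument from the cited reference. Your queue-counting identity is the right mechanism: the queue size $1+\sum_{\ell\leq j}c_\ell-j$ being at least $1$ for $j\leq n$ is equivalent to $\#\{i: r(i,t)\leq j\}\geq j$, which is exactly the rearrangement form of the parking condition (and at $j=n$ it also forces every $r(i,t)\leq n$, so the map does land in $\llbracket 1,n\rrbracket^n$); conversely the same inequality is precisely what keeps the reconstruction algorithm supplied with a vertex $v_{j+1}$ to process. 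Two points you assert rather than verify, and which would need a line each in a complete write-up: (a) the output of the inverse algorithm is acyclic and connected (each vertex is attached to a vertex strictly earlier in the list, and every label in $\llbracket 1,n\rrbracket$ is appended exactly once since $r_i\leq n$), and (b) the BFS enumeration of the reconstructed tree, with children ordered by increasing label, reproduces the constructed list, which holds because children are appended in exactly that order at the moment their parent is dequeued. With those details filled in, the two maps are mutually inverse and the bijection follows; your remark that one could instead prove injectivity and invoke $|\mathfrak{C}_{n+1}|=|P_n|=(n+1)^{n-1}$ is also valid, though the explicit inverse is closer in spirit to how the bijection is exploited probabilistically later in the paper.
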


\begin{rem}
Chassaing and Louchard \cite{louchard} described a similar bijection using what they call the \textit{standard} order instead of the breadth first search.
\end{rem}

\begin{figure}[!h]
\begin{center}
    \begin{tikzpicture}
    [root/.style = {draw,circle,double,minimum size = 20pt,font=\small},
    vertex/.style = {draw,circle,minimum size = 25pt, font=\small}]
        \node[vertex] (1) at (0,0) {0,\textcolor{red}{1}};
        \node[vertex] (2) at (-1,1.5) {3,\textcolor{red}{2}};
        \node[vertex] (3) at (0,1.5) {4,\textcolor{red}{3}};
        \node[vertex] (4) at (1,1.5) {6,\textcolor{red}{4}};
        \node[vertex] (5) at (-2,3) {1,\textcolor{red}{5}};
        \node[vertex] (6) at (-1,3) {2,\textcolor{red}{6}};
        \node[vertex] (7) at (0,3) {5,\textcolor{red}{7}};
        \node[vertex] (8) at (1,3) {8,\textcolor{red}{8}};
        \node[vertex] (9) at (0.5,4.5) {7,\textcolor{red}{9}};
        \node[vertex] (10) at (1.5,4.5) {9,\textcolor{red}{10}};
        \draw (1) -- (2) ;
        \draw (1) -- (3);
        \draw (1) -- (4);
        \draw (2) -- (5);
        \draw (2) -- (6);
        \draw (2) -- (7);
        \draw (4) -- (8);
        \draw (8) -- (9);
        \draw (8) -- (10);
    \end{tikzpicture}
\caption{Example of a Cayley tree $t$ with 10 vertices. For every vertex, its label is represented in black on the left and its rank for the breadth first search is in red on the right. For instance, here we have $r(5,t) = \textcolor{red}{2}$. The parking function associated with this tree by (\ref{bijection}) is $(2,2,1,1,2,1,8,4,8)$.} 
\label{figure breadth first search}
\end{center}
\end{figure}
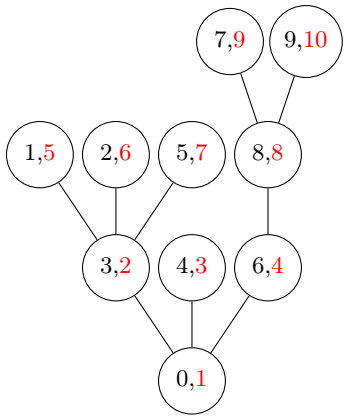
Let $(X_i)_{i \geq 1}$ be independent and identically distributed random variables distributed as a Poisson distribution of parameter 1. For all $n \geq 0$ we set $S_n := \sum_{i=1}^n (X_i -1)$ and for all $a \in \mathbb{Z}$, $\tau_{a} := \min \{ n \geq 1: S_n=a \}$ the first time that the random walk $(S_n)_n$ reaches $a$. Consider the probability measure $\mathbb{P}_n := \mathbb{P} (~\cdot~ |\tau_{-1}=n+1)$ and set $\mathbb{E}_n := \mathbb{E} [~\cdot~|\tau_{-1}=n+1]$. It is well known that a Bienaymé-Galton-Watson tree with a critical Poisson reproduction law conditioned on having $n$ vertices has the same distribution, when we uniformly randomly label the vertices from 1 to $n$, as a uniform Cayley tree with $n$ vertices (see e.g. \cite[Example\,10.2]{simplyjanson}). From this, Chassaing and Marckert deduce the following Corollary.
\begin{cor}[Chassaing and Marckert]
\label{corner stone}
Let $T_{n+1}$ be a random Cayley tree in $\mathfrak{C}_{n+1}$ with uniform distribution. The random vector $\left( \# \{1\leq j \leq n: r(j,T_{n+1})=i\} \right)_{1 \leq i \leq n+1}$ has the same distribution as $(X_i)_{1 \leq i \leq n+1}$ under $\mathbb{P}_n$.
\end{cor}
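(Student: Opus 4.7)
The plan is to deduce this from the identification, recalled just before the corollary, of a uniform Cayley tree with a uniformly labelled critical Poisson$(1)$ Galton--Watson tree conditioned on total size $n+1$, combined with the classical breadth-first encoding of a Galton--Watson tree.

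First I would invoke the BFS encoding: if $\tau$ is a Poisson$(1)$ Galton--Watson tree built level by level and $C_i$ denotes the number of children of the $i$-th vertex visited in BFS order, then $(C_i)_{i\ge 1}$ is i.i.d.\ Poisson$(1)$, and $\{|\tau|=n+1\}$ coincides with $\{\tau_{-1}=n+1\}$ for the walk $S_k = \sum_{i=1}^k (C_i-1)$. Hence, conditionally on $|\tau|=n+1$, the vector $(C_1,\dots,C_{n+1})$ has the law of $(X_1,\dots,X_{n+1})$ under $\mathbb{P}_n$. Next I would label the non-root vertices of $\tau$ uniformly with $\{1,\dots,n\}$ and the root with $0$; by the cited fact, this gives a uniform element $T_{n+1}$ of $\mathfrak{C}_{n+1}$.

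The subtlety is that Chassaing and Marckert's BFS on $T_{n+1}$ orders siblings by increasing label, rather than by the birth order used to build $\tau$, and the BFS children count sequence does in general depend on the sibling ordering. To bridge this gap I introduce the plane tree $\tau'$ obtained from $\tau$ by re-ordering, at each vertex, the children by increasing label: by construction the plane BFS of $\tau'$ coincides with the label-BFS of $T_{n+1}$, so the vector in the corollary equals $(c_1(\tau'),\dots,c_{n+1}(\tau'))$. It then remains to show that $\tau$ and $\tau'$ have the same law as plane trees.

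This last step is the main obstacle. Since the Poisson Galton--Watson weight of a plane tree $t$ is $\prod_v e^{-1}/c_v(t)!$, which depends only on the underlying unordered rooted tree $U$, the conditional law of the plane ordering of $\tau$ given $U$ is uniform over the $\prod_v c_v!$ options. For $\tau'$, the plane ordering at each vertex $v$ is the increasing rearrangement of the $c_v$ labels of its children; since the labelling is a uniform bijection on the non-root vertices, its restriction to the disjoint sibling sets is jointly exchangeable, so the induced within-parent orderings are also jointly uniform. Hence conditionally on $U$ the plane ordering of $\tau'$ is uniform too, $\tau \stackrel{d}{=} \tau'$ as plane trees, and the corollary follows. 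The only delicate piece of work is making this exchangeability argument for the label-induced within-parent orderings fully rigorous.
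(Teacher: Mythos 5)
Your proposal is correct and follows essentially the route the paper itself indicates: the paper gives no proof of this corollary (it is quoted from Chassaing and Marckert and deduced from the identification, stated just before it, of a uniform Cayley tree with a uniformly labelled conditioned Poisson Galton--Watson tree), and your argument is exactly that deduction --- BFS encoding of the conditioned Galton--Watson tree plus uniform labelling --- with the sibling-ordering subtlety, which the paper leaves implicit, made explicit via the invariance of the plane tree law under label-induced reordering. The only point to tighten is the phrase ``uniform over the $\prod_v c_v!$ options'': when some subtrees coincide, distinct orderings can give the same plane tree, so one should say that, given the unordered rooted tree $U$, both $\tau$ and $\tau'$ are uniform over the \emph{distinct} plane representatives of $U$ (for $\tau'$ because each representative arises from the same number, $|\mathrm{Aut}(U)|$, of ordering choices), after which $\tau\stackrel{d}{=}\tau'$ and the corollary follow as you state.
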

We are now able to state and prove the main result of this section.
\begin{prop}
\label{reformulating prop}
Fix $1 \leq k \leq n$ and $1 \leq i_1, \dots ,i_k \leq n$. Let $j_1<\dots<j_r$ be such that $\{i_1,\dots,i_k\} = \{j_1,\dots,j_r\}$ and define $m_s = \# \{u: i_u = j_s \}$ for all $1 \leq s \leq r$. Then
\begin{equation}
\label{reformulating}
\mathbb{P}(\pi_n(1)=i_1,\dots,\pi_n(k)=i_k) =  \frac{(n-k)!}{n!} \mathbb{E}_n \left[ \prod_{s=1}^r (X_{j_s})_{m_s} \right]    
\end{equation}
where $(x)_m := x(x-1)\cdots(x-m+1)$.
\end{prop}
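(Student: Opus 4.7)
My plan is to exploit the bijection of Theorem \ref{thm bij}: since $\pi_n$ has the same distribution as $(r(1,T_{n+1}),\dots,r(n,T_{n+1}))$ for $T_{n+1}$ uniform on $\mathfrak{C}_{n+1}$, it suffices to compute $\mathbb{P}(r(1,T_{n+1})=i_1,\dots,r(k,T_{n+1})=i_k)$. I would condition on the children-counts vector $C = (c_1,\dots,c_{n+1})$ defined by $c_i := \#\{1\leq j\leq n : r(j,T_{n+1})=i\}$. The starting point is the observation that a rooted plane tree on $n+1$ vertices is uniquely determined by its BFS degree sequence, so each Cayley tree in $\mathfrak{C}_{n+1}$ corresponds bijectively to a pair consisting of a valid degree sequence $C$ together with an ordered partition $(B_1,\dots,B_{n+1})$ of $\{1,\dots,n\}$ with $|B_i| = c_i$, where $B_i$ lists the labels of the children of the BFS-rank-$i$ vertex placed in increasing order in the plane. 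Consequently the number of Cayley trees with a prescribed $C$ equals $n!/\prod_i c_i!$, and conditionally on $C$ each such ordered partition is equally likely under the uniform law on~$\mathfrak{C}_{n+1}$.

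\textbf{Counting favourable configurations.} Conditionally on $C$, the event $\{r(j,T_{n+1})=i_j : 1\leq j\leq k\}$ says that for every $s \in \{1,\dots,r\}$ the $m_s$ labels $\{u : i_u = j_s\}\subset\{1,\dots,k\}$ all belong to block $B_{j_s}$, which requires $c_{j_s}\geq m_s$. Once these $k$ labels have been pre-assigned to the prescribed blocks, the remaining $n-k$ labels may be distributed freely, giving $(n-k)!/\prod_i (c_i - m'_i)!$ admissible labelings, where $m'_i := m_s$ if $i = j_s$ and $m'_i := 0$ otherwise. Dividing by the total count $n!/\prod_i c_i!$ would yield
\[
\mathbb{P}\bigl(r(1,T_{n+1})=i_1,\dots,r(k,T_{n+1})=i_k \,\bigm|\, C\bigr) = \frac{(n-k)!}{n!}\prod_{s=1}^{r}(c_{j_s})_{m_s},
\]
which remains valid when some $c_{j_s}<m_s$ because both sides are then zero. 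Taking expectation in $C$ and invoking Corollary \ref{corner stone} to substitute the law of $(c_1,\dots,c_{n+1})$ by that of $(X_1,\dots,X_{n+1})$ under $\mathbb{P}_n$ would then deliver the claimed formula (\ref{reformulating}).

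\textbf{Main obstacle.} The only delicate step is justifying the two-step decomposition of a uniform Cayley tree as (degree sequence, ordered partition of non-root labels by parent rank): one must verify that this really is a bijection on $\mathfrak{C}_{n+1}$ whose fibres over each valid $C$ have size $n!/\prod_i c_i!$, so that conditionally on $C$ the ordered partition is uniformly distributed. This is also fully consistent with Corollary \ref{corner stone}, since a cyclic-lemma computation gives $\mathbb{P}_n(X_1=c_1,\dots,X_{n+1}=c_{n+1}) = n!/((n+1)^{n-1}\prod_i c_i!)$ for every valid degree sequence. Once this uniformity is in hand, the remaining steps reduce to an elementary multinomial count followed by a single expectation, and I anticipate no further difficulty.
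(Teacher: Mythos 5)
Your proof is correct, but its combinatorial core is genuinely different from the paper's. The paper symmetrizes: since permuting the entries of a parking function preserves the uniform law, it writes $\mathbb{P}(\pi_n(1)=i_1,\dots,\pi_n(k)=i_k)$ as $\frac{(n-k)!}{n!}$ times the expected number of injections $\sigma:\llbracket 1,k\rrbracket\to\llbracket 1,n\rrbracket$ with $r(\sigma(u),T_{n+1})=i_u$ for all $u$, observes that this number is exactly $\prod_{s}\bigl(\#\{j: r(j,T_{n+1})=j_s\}\bigr)_{m_s}$, and then applies Corollary \ref{corner stone} as a black box. You instead condition on the child-count vector $C$ and invoke the finer structural fact that, given $C$, the assignment of the non-root labels to sibling groups is a uniform ordered partition with block sizes $c_i$; the falling factorials then come from a multinomial count. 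That refinement does not follow from the statement of Corollary \ref{corner stone} alone, and you rightly flag it as the delicate point, but it does close as you anticipate: the map sending a Cayley tree to (BFS child counts, ordered partition by parent rank) is injective because the plane tree is reconstructed from its BFS degree sequence and labels are placed in increasing order within each sibling group, and your cyclic-lemma computation $\mathbb{P}_n(X_1=c_1,\dots,X_{n+1}=c_{n+1})=n!/\bigl((n+1)^{n-1}\prod_i c_i!\bigr)$ shows each fibre has exactly $n!/\prod_i c_i!$ elements, so injectivity forces surjectivity onto each fibre and hence conditional uniformity (this is also essentially the content of Chassaing--Marckert's construction). What your route buys is an explicit description of the conditional law of the labelled tree given its degree sequence, which is of independent interest; what the paper's exchangeability trick buys is brevity and the fact that no input beyond Theorem \ref{thm bij} and Corollary \ref{corner stone} as stated is needed. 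Your remark that the conditional identity persists when some $c_{j_s}<m_s$ because both sides vanish is a correct and necessary check.
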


\begin{proof}
Let $T_{n+1}$ be a random Cayley tree in $\mathfrak{C}_{n+1}$ with uniform distribution. Denote by $\mathfrak{S}(k,n)$ the set of all injections between $\llbracket 1,k \rrbracket$ and $\llbracket 1,n \rrbracket$. We have
\begin{align*}
\mathbb{P}(\pi_n(1)=i_1,\dots,\pi_n(k)=i_k) & = \frac{(n-k)!}{n!} \sum_{\sigma \in \mathfrak{S}(k,n)} \mathbb{P}(\pi_n(\sigma(1))=i_1,\dots,\pi_n(\sigma(k))=i_k) \\
& = \frac{(n-k)!}{n!} \mathbb{E} \left[ \sum_{\sigma \in \mathfrak{S}(k,n)} \mathds{1}_{r(\sigma(1),T_{n+1})=i_1,\dots,r(\sigma(k),T_{n+1})=i_k} \right] \\
& = \frac{(n-k)!}{n!} \mathbb{E} \left[ \prod_{s=1}^r (\# \{1\leq j \leq n: r(j,T_{n+1})=j_s\})_{m_s} \right] \\
& = \frac{(n-k)!}{n!} \mathbb{E} \left[ \prod_{s=1}^r (X_{j_s})_{m_s} \right].
\end{align*}
The first equality comes from the fact that any permutation of a parking function is still a parking function, thus any permutation induces a bijection in $P_n$. The second equality comes from Theorem \ref{thm bij} and the last from Corollary \ref{corner stone}. This completes the proof.
\end{proof}

\section{Convergence for the total variation distance} \label{sec conv for tv dist}

In this section we suppose that $k_n = o(\sqrt{n})$. We will write $k$ instead of $k_n$ to make notation lighter but keep in mind that $k$ depends on $n$. The goal of this section is to show item (i) of Theorem \ref{main thm}.

\subsection{Probability that the parking places are distinct} \label{sec proba distinct}

The first step is to reduce the problem to distinct parking places, in this case Equation (\ref{reformulating}) becomes easier. To this end we introduce the set of distinct indices $D_n := \{ (u_1,\dots,u_k) \in \llbracket 1,n \rrbracket^k: i \neq j \Rightarrow u_i \neq u_j \}$. We also introduce the set $G_n := \{ (u_1,\dots,u_k) \in \llbracket 1,n \rrbracket^k: \mathbb{P}(\pi_n(1)=u_1,\dots,\pi_n(k)=u_k) \geq (n-k)!/n! \}$ and the quantity
\begin{equation}
\label{dtv'}
\delta(k,n) := \sum_{\substack{(i_1,\dots,i_k) \\ \in D_n \cap G_n}} \left[\mathbb{P}(\pi_n(1)=i_1,\dots,\pi_n(k)=i_k) - \frac{(n-k)!}{n!}\right].    
\end{equation}
The next Lemma shows that the first $k$ parking places of a uniform parking function are all distinct with high probability. It also shows that if $\delta(k,n)$ is a $O(k/\sqrt{n})$ then so is $d_{TV}(k,n)$. Recall that $(U_n(i))_{1 \leq i \leq n}$ are i.i.d. uniformly distributed in $\llbracket 1,n \rrbracket$.
\begin{lemme}
\label{Igor trick}
We have
\begin{enumerate}[(i)]
\item $\mathbb{P}((U_n(1),\dots,U_n(k)) \in D_n) = 1 + O\left( \frac{k}{\sqrt{n}} \right)$,
\item $\mathbb{P}((\pi_n(1),\dots,\pi_n(k)) \in D_n) = 1 + O\left( \frac{k}{\sqrt{n}} \right)$,
\item $\delta(k,n) = O\left( \frac{k}{\sqrt{n}} \right) \implies d_{TV}(k,n) = O\left( \frac{k}{\sqrt{n}} \right)$.
\end{enumerate}
\end{lemme}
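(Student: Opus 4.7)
The plan is to prove the three items in order, with Proposition \ref{reformulating prop} as the main input for (ii) and a triangle-inequality argument via an intermediate measure for (iii). For (i), a union bound over pairs immediately gives
$$
\mathbb{P}\bigl((U_n(1),\dots,U_n(k))\notin D_n\bigr) \leq \binom{k}{2}\frac{1}{n} = O\!\left(\frac{k^2}{n}\right),
$$
which is $O(k/\sqrt{n})$ since $k=o(\sqrt{n})$.

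For (ii), the uniform distribution on $P_n$ is exchangeable in positions (since $P_n$ is permutation-invariant as a set of functions), so a union bound reduces matters to $\mathbb{P}(\pi_n \notin D_n) \leq \binom{k}{2}\,\mathbb{P}(\pi_n(1)=\pi_n(2))$. Applying Proposition \ref{reformulating prop} with $k=2$, $i_1=i_2=i$ and summing over $i\in\llbracket 1,n\rrbracket$,
$$
\mathbb{P}(\pi_n(1)=\pi_n(2)) = \frac{1}{n(n-1)}\,\mathbb{E}_n\!\left[\sum_{i=1}^n X_i(X_i-1)\right].
$$
Since $\sum_i X_i=n$ almost surely under $\mathbb{P}_n$, the task reduces to the second-moment bound $\mathbb{E}_n\!\left[\sum_i X_i^2\right] = O(n)$. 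I would establish this via the cycle lemma: on symmetric statistics it lets us replace the stopping-time conditioning $\{\tau_{-1}=n+1\}$ by the exchangeable conditioning $\{\sum_i Y_i=n\}$ for i.i.d.\ Poisson$(1)$ variables $Y_i$, under which the vector is Multinomial$(n;1/(n+1),\dots,1/(n+1))$ with Binomial$(n,1/(n+1))$ marginals. A one-line moment computation then yields the bound, and hence $\mathbb{P}(\pi_n\notin D_n) = O(k^2/n) = O(k/\sqrt{n})$.

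For (iii), introduce the intermediate probability measure $\tilde q$ which is uniform on the set of injective $k$-tuples (so $\tilde q(u) = (n-k)!/n!$ if $u \in D_n$ and $0$ otherwise), and apply the triangle inequality
$$
d_{TV}(k,n) \leq d_{TV}(\pi_n,\tilde q) + d_{TV}(U_n,\tilde q).
$$
A direct calculation gives $d_{TV}(U_n,\tilde q) = 2\,\mathbb{P}(U_n\notin D_n) = O(k/\sqrt{n})$ by (i). For the first term, split the sum defining $d_{TV}(\pi_n,\tilde q)$ over $D_n$ into $D_n\cap G_n$ and $D_n\cap G_n^c$ and use the identity $\sum_{u\in D_n}\bigl(p(u)-\tilde q(u)\bigr) = -\mathbb{P}(\pi_n\notin D_n)$ to express the negative part in terms of $\delta(k,n)$ and (ii); this yields $d_{TV}(\pi_n,\tilde q) = 2\delta(k,n) + 2\,\mathbb{P}(\pi_n\notin D_n) = 2\delta(k,n) + O(k/\sqrt{n})$, from which the implication follows.

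The main obstacle is the second-moment bound $\mathbb{E}_n[\sum_i X_i^2]=O(n)$ in (ii); the cycle lemma is the natural tool, converting the awkward first-hitting-time conditioning into a symmetric multinomial one, after which the computation is elementary. Items (i) and (iii) are routine, the key insight for (iii) being the choice of the intermediate measure $\tilde q$ that serves as a bridge between $\pi_n$ and $U_n$.
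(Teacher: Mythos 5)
Your proposal is correct, and parts (i) and (iii) essentially coincide with the paper's argument: (i) is the same union bound, and (iii), although you phrase it through the intermediate measure $\tilde q$ uniform on $D_n$ and a triangle inequality, is a reorganization of the same bookkeeping the paper performs with the positive/negative parts $d_n^\pm$ (both rest on $n^k(n-k)!/n! = 1 + O(k/\sqrt{n})$ together with (i) and (ii)); your identity $d_{TV}(\pi_n,\tilde q) = 2\delta(k,n) + 2\,\mathbb{P}(\pi_n \notin D_n)$ is exactly right. Part (ii), however, takes a genuinely different route. The paper argues on the tree side: by the Chassaing--Marckert bijection the event becomes ``the vertices $1,\dots,k$ of a uniform Cayley tree have distinct parents'', and the Prüfer encoding (parents read off a uniform sequence in $\llbracket 0,n\rrbracket^{n-1}$) reduces this to the birthday bound of (i). You instead stay on the random-walk side: exchangeability of the parking function plus Proposition \ref{reformulating prop} with $i_1=i_2=i$ give $\mathbb{P}(\pi_n(1)=\pi_n(2)) = \frac{1}{n(n-1)}\,\mathbb{E}_n\bigl[\sum_i X_i(X_i-1)\bigr]$, and the cyclic lemma replaces the conditioning $\{\tau_{-1}=n+1\}$ by $\{\sum_{i=1}^{n+1}X_i=n\}$, i.e.\ a Multinomial$(n;\frac{1}{n+1},\dots,\frac{1}{n+1})$ vector whose factorial second moments are each at most $1$, yielding the $O(1/n)$ collision probability. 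This is valid; the one point to make explicit is that the cyclic lemma requires invariance under cyclic shifts of all $n+1$ increments, whereas your sum runs only to $n$ --- either extend the sum to $i=n+1$ (it only increases, the terms being nonnegative) or note that $X_{n+1}=0$ almost surely under $\mathbb{P}_n$. There is no circularity, since Proposition \ref{reformulating prop} is established before this lemma. The trade-off: the paper's Prüfer argument is a short self-contained combinatorial reduction to (i), while your argument is more robust --- it only uses the excursion structure and exchangeability, so it would adapt to other critical offspring laws with finite variance without any tree-encoding input.
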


\begin{proof}
Let $\mu_n$ be the law of $(\pi_n(1),\dots,\pi_n(k))$ and $\nu_n$ the law of $(U_n(1),\dots,U_n(k))$, with support on the same finite space $E_n := \llbracket 1,n \rrbracket^k$. First we check (i). By Markov's inequality
$$
\nu_n(D_n^\mathrm{c}) = \mathbb{P}\left( \sum_{r<s} \mathds{1}_{U_n(r) = U_n(s)} \geq 1 \right) \leq \sum_{r < s} \mathbb{P} \left( U_n(r) = U_n(s) \right) = \sum_{r < s} \frac{1}{n} = \frac{1}{n} \frac{k(k-1)}{2}.
$$
Since $k=o(\sqrt{n})$ we have that
$$
\frac1n \frac{k(k-1)}{2} = O\left( \frac{k^2}{n} \right) = O\left( \frac{k}{\sqrt{n}} \right).
$$
Now we check (ii). To do so, we will use the Prüfer encoding (or a slight variant thereof) of a rooted Cayley tree $t \in \mathfrak{C}_{n}$ into a sequence $(p_1,\dots,p_{n-2}) \in \llbracket 0,n-1 \rrbracket^{n-2}$, which we now explain. For $a \in \llbracket 1,n-1 \rrbracket$, define $p(t,a)$ to be the label of the parent of the vertex labeled $a$ in $t$. Also, define $\ell(t)$ as the biggest leaf label of $t$, and $t^*$ the tree $t$ obtained after removing the leaf labeled $\ell(t)$ and its adjacent edge. Finally we define the sequence of trees, $t_1 := t$, $t_i := t_{i-1}^*$ for $2 \leq i \leq n-2$. The Prüfer encoding of $t$ is then defined as $p_i := p(t_i,\ell(t_i))$. For example, the Prüfer encoding of the tree in figure \ref{figure breadth first search} is $(8,8,6,0,3,0,3,3)$. The key property of this encoding is that it is a bijection between the sets $\mathfrak C_n$ and $\llbracket 0,n-1 \rrbracket^{n-2}$. Now, let $T_{n+1}$ be a uniform Cayley tree in $\mathfrak{C}_{n+1}$. Theorem \ref{thm bij} implies that $\mu_n(D_n)$ is equal to the probability that the vertices labeled 1 to $k$ in $T_{n+1}$ have distinct parents. Let $(v_1,\dots,v_k)$ be a random vector with uniform distribution in $D_n$ independent of $T_{n+1}$. Since the distribution of $T_{n+1}$ is invariant under permutation of the labels, the previous probability is also equal to the probability that the vertices labeled $v_1,\dots,v_k$ have distinct parents in $T_{n+1}$. Let $(p_1,\dots,p_{n-1})$ be the Prüfer encoding of the tree $T_{n+1}$. We complete this vector with $p_n := 0$ (this comes from the fact that $t_{n-2}$ has two vertices, one of them being the root labeled 0). Since $T_{n+1}$ is uniformly distributed in $\mathfrak{C}_{n+1}$, the vector $(p_1,\dots,p_{n-1})$ is uniformly distributed in $\llbracket 0,n \rrbracket^{n-1}$. From the previous discussion and the definition of the Prüfer encoding we deduce that
$$
\mu_n(D_n) = \mathbb{P} ( (p_{v_1},\dots,p_{v_k})\in D_n).
$$
Consider the event $Z_n := \{ v_1,\dots,v_k \neq n \}$. Under this event, it is easy to see that $(p_{v_1},\dots,p_{v_k})$ has the same law as $k$ i.i.d random variables uniformly distributed in $\llbracket 0,n \rrbracket$. So from (i) we have
$$
\mu_n(D_n) \geq \mathbb{P} ( (p_{v_1},\dots,p_{v_k})\in D_n \,|\, Z_n) \mathbb{P}(Z_n) = \left( 1 + O\left( \frac{k}{\sqrt{n}} \right) \right)\mathbb{P}(Z_n).
$$
To conclude, notice that $\mathbb{P}(Z_n) = 1-k/n$.

\medskip

\noindent Finally we show (iii). Assume that $\delta(k,n) = O\left( k/\sqrt{n} \right)$. For all $i_1,\dots,i_k$ denote by $\Delta_{i_1,\dots,i_k}$ the quantity $(\mathbb P (\pi_n(1)=i_1,\dots,\pi_n(k)=i_k)-(n-k)!/n!)$. Notice that $n^k(n-k)!/n!-1 = O(k/\sqrt{n})$ so
$$
d_{TV}(k,n) = \sum_{i_1,\dots,i_k=1}^n \left| \Delta_{i_1,\dots,i_k} \right| + O\left( \frac{k}{\sqrt{n}} \right).
$$
Denote by $d^+_n$ the sum of $\Delta_{i_1,\dots,i_k}$ over the indices in $G_n$ and $d^-_n$ the opposite of the sum over the indices in $E_n \setminus G_n$. We have that
$$
d^+_n - d^-_n = \sum_{i_1,\dots,i_k=1}^n \Delta_{i_1,\dots,i_k} = 1 - \frac{n^k(n-k)!}{n!} = O\left( \frac{k}{\sqrt{n}} \right).
$$
The last two equalities imply that
$$
d_{TV}(k,n) = d^+_n + d^-_n + O\left( \frac{k}{\sqrt{n}} \right) = 2d^+_n + O\left( \frac{k}{\sqrt{n}} \right).
$$
In conclusion we just need to show that $d^+_n$ is a $O(k/\sqrt n)$. Notice that
\begin{align*}
d^+_n = \delta(k,n) + \mu_n(D_n^\mathrm c \cap G_n) - \nu_n(D_n ^\mathrm c \cap G_n) \times \frac{n^k(n-k)!}{n!}.
\end{align*}
From (i), (ii) and the assumption on $\delta(k,n)$ we deduce that $d^+_n$ is indeed a $O(k/\sqrt{n})$.
\end{proof}
To prove (i) of Theorem \ref{main thm} it remains to show that $\delta(k,n) = O(k/\sqrt{n})$. This is the goal of the next three sections.

\subsection{A monotonicity argument}

In this section we bound the terms $\mathbb{E}_n \left[ X_{i_1} \cdots X_{i_k}   \right]$ that appear in (\ref{reformulating}) when $i_1,\dots,i_k$ are distinct with terms involving $\mathbb{E}_n \left[ S_{i_1} \cdots S_{i_k}   \right]$ since the latter are more manageable. More precisely, the aim of this section is to prove the following result.
\begin{prop}
\label{prop majorare minorare}
Fix $i_1,\dots,i_k \in \llbracket 1,n \rrbracket$ pairwise distinct. We have
\begin{align}
\label{majorare}
    i_1 \cdots i_k \, \mathbb{E}_n [ X_{i_1} \cdots X_{i_k} ] \leq \mathbb{E}_n [ (S_{i_1}+i_1) \cdots (S_{i_k}+i_k) ]
\end{align}
\end{prop}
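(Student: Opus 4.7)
The plan is to first expand the right-hand side of~(\ref{majorare}). Using $S_{i_s}+i_s = X_1+\cdots+X_{i_s}$ and distributing the product,
\[
\mathbb{E}_n\bigl[(S_{i_1}+i_1)\cdots(S_{i_k}+i_k)\bigr] = \sum_{(j_1,\ldots,j_k) \in \llbracket 1,i_1 \rrbracket \times \cdots \times \llbracket 1,i_k \rrbracket} \mathbb{E}_n[X_{j_1}\cdots X_{j_k}],
\]
which is a sum of exactly $i_1\cdots i_k$ terms. The proposition therefore reduces to the termwise domination
\[
\mathbb{E}_n[X_{j_1}\cdots X_{j_k}] \geq \mathbb{E}_n[X_{i_1}\cdots X_{i_k}] \qquad \text{for every } (j_1, \ldots, j_k) \text{ with } j_s \leq i_s.
\]

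The key tool is a swap involution. For $a \in \llbracket 1,n\rrbracket$, let $\tau$ denote the involution on configurations that exchanges the $a$-th and $(a+1)$-th coordinates. The product Poisson measure $\mathbb{P}$ is $\tau$-invariant, but the conditioning set $A = \{\tau_{-1}=n+1\}$ is not: any configuration $x \in A \setminus \tau A$ must satisfy $X_a(x) > X_{a+1}(x)$, since otherwise $S_a(\tau x) = S_a(x) + X_{a+1}(x) - X_a(x) \geq 0$ and the swapped configuration $\tau x$ would also lie in $A$. Writing $A = (A\cap \tau A) \sqcup (A \setminus \tau A)$ and using the $\tau$-invariance of the first piece yields the key monotonicity: for any nonnegative $h$ not depending on $X_a, X_{a+1}$ and any $p \geq q \geq 0$,
\[
\mathbb{E}_n[h X_a^p X_{a+1}^q] \geq \mathbb{E}_n[h X_a^q X_{a+1}^p].
\]
Indeed, the integrand of the difference is $\tau$-antisymmetric on $A\cap \tau A$ (contributing zero) and pointwise nonnegative on $A\setminus \tau A$ by $X_a > X_{a+1}$ and $p \geq q$.

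It then remains to iterate this monotonicity to transfer exponents from $(X_{i_s})_s$ to $(X_{j_s})_s$ via adjacent swaps, each not decreasing the expectation. The main technical obstacle is the appearance of coincident indices during the reduction. This can be managed by alternating between single-unit shifts $(p,q)\mapsto(p+1,q-1)$ at an adjacent pair $(a,a+1)$ (valid via the key lemma whenever $p \geq q-1$) and full consolidations moving the entire exponent $q$ from $a+1$ onto $a$ (always an increase, obtained by iterating the key lemma, possibly after first swapping to $(q,p)$ when $p<q$). Processing positions from right to left and selecting the appropriate type of move at each step allows one to reach the target multiset $\{j_1, \ldots, j_k\}$, thereby establishing the termwise inequality and hence the proposition.
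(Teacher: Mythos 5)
Your overall route is the paper's own: expand $\mathbb{E}_n[(S_{i_1}+i_1)\cdots(S_{i_k}+i_k)]$ into the $i_1\cdots i_k$ terms $\mathbb{E}_n[X_{j_1}\cdots X_{j_k}]$ with $j_s\le i_s$ and prove termwise domination, the engine being an adjacent-transposition argument on the excursion event (your key monotonicity is essentially the paper's Lemma \ref{decroissance}, and your involution proof of it is correct: on the non-$\tau$-invariant part of the conditioning event one indeed has $X_a>X_{a+1}$). The gap is in the step where coincident indices are created. Your key lemma is an \emph{exchange} inequality: it compares the exponent pair $(p,q)$ with $(q,p)$ and therefore preserves the multiset of exponents attached to the sites. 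No iteration of it can merge two factors onto the same index, so neither the single-unit shift $(p,q)\mapsto(p+1,q-1)$ for $p\ge q$ nor the ``full consolidation'' $(p,q)\mapsto(p+q,0)$ is ``valid via the key lemma'' or ``obtained by iterating the key lemma'' as you assert (only the degenerate case $p=q-1$ of the shift is an exchange). Since handling repeated values $j_s$ is the only nontrivial content of the termwise inequality, this is a genuine missing step, not a routine detail.

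The step is repairable, in two ways. The paper's fix is Young's inequality, $X_a^pX_{a+1}\le\frac{p}{p+1}X_a^{p+1}+\frac{1}{p+1}X_{a+1}^{p+1}$, followed by the exchange inequality applied to the second term; this is exactly (\ref{young}) and gives the merge of a unit factor into an adjacent block, which suffices for the induction. Alternatively, you can rerun your involution argument directly on the shift: the difference equals $\mathbb{E}_n\bigl[h\,X_a^pX_{a+1}^{q-1}(X_a-X_{a+1})\bigr]$, whose paired contribution on $A\cap\tau A$ is $h\,(X_a-X_{a+1})\bigl(X_a^pX_{a+1}^{q-1}-X_a^{q-1}X_{a+1}^p\bigr)\ge 0$ precisely when $p\ge q-1$ (your stated threshold, which suggests this is what you had in mind), and which is nonnegative on $A\setminus\tau A$ because there $X_a>X_{a+1}$. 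Either patch must be stated and proved explicitly; as written, the consolidation moves do not follow from the lemma you established. The scheduling of moves is also left vague, but once merges are available a left-to-right processing of the target values (exchanges through unoccupied indices, then merging one unit factor at a time into the block being built) goes through, so that part is only a matter of writing out the induction, as in the paper's proof of (\ref{eq preuve prop majorare minorare}).
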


To prove Proposition \ref{prop majorare minorare} we first state a really useful lemma which, put in simple terms, says that the steps of the random walk $S$ tend to decrease under $\mathbb{P}_n$. 
\begin{lemme}
\label{decroissance}
Fix $n \geq k \geq 1$ and $m_1, \dots, m_k \geq 1$. Let $1 \leq i_1 < \dots < i_k \leq n$, $1 \leq j_1 < \dots < j_k \leq n$ be such that $j_r \leq i_r$ for all $1 \leq r \leq k$. Finally let $f:\mathbb{N} \times \mathbb{N}^* \mapsto [0,\infty)$ be a nonnegative function such that $f(0,m)=0$, then
\begin{equation}
\mathbb{E}_n [ f(X_{i_1},m_1) \cdots f(X_{i_k},m_k) ] \leq \mathbb{E}_n [ f(X_{j_1},m_1) \cdots f(X_{j_k},m_k) ].    
\end{equation}
\end{lemme}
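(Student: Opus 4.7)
My plan is to reduce Lemma \ref{decroissance} to a single-swap inequality, where one chosen coordinate $i_r$ is decremented by $1$, and then prove that inequality by exploiting the exchangeability of the i.i.d.\ Poisson sequence $(X_i)$ together with the hypothesis $f(0,m)=0$.

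\textbf{Reduction to single swaps.} I would induct on $d := \sum_{r=1}^{k}(i_r - j_r) \geq 0$; the case $d=0$ is trivial. For $d \geq 1$, pick the smallest index $r$ with $i_r > j_r$. By minimality and strict ordering, $i_{r-1} = j_{r-1} \leq j_r - 1 \leq i_r - 2$ (with the convention $i_0 = j_0 := 0$), so the tuple $(i_1,\dots,i_{r-1},i_r-1,i_{r+1},\dots,i_k)$ is still strictly increasing, still pointwise dominates $(j_1,\dots,j_k)$, and has its $d$-value one smaller. It therefore suffices to prove the following single-swap inequality: writing $\ell := i_r$ and $G := \prod_{s \neq r} f(X_{i_s},m_s)$ (a nonnegative function of coordinates in $\{1,\dots,n+1\}\setminus\{\ell-1,\ell\}$),
\[
\mathbb{E}_n[G\, f(X_\ell,m_r)] \leq \mathbb{E}_n[G\, f(X_{\ell-1},m_r)].
\]

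\textbf{Proof of the single swap.} Let $\sigma$ denote the transposition of positions $\ell-1$ and $\ell$ and write $X^\sigma$ for the swapped sequence. Under the unconditioned product law $\mathbb{P}$ we have $X \stackrel{d}{=} X^\sigma$, and since $G$ does not depend on the swapped coordinates while $X^\sigma_{\ell-1} = X_\ell$, setting $A := \{\tau_{-1}=n+1\}$ and $A^\sigma := A(X^\sigma)$ yields
\[
\mathbb{E}\bigl[G\, f(X_{\ell-1},m_r)\mathbf{1}_A\bigr] = \mathbb{E}\bigl[G\, f(X_\ell,m_r)\mathbf{1}_{A^\sigma}\bigr],
\]
and therefore
\[
\mathbb{E}\bigl[G\, f(X_{\ell-1},m_r)\mathbf{1}_A\bigr] - \mathbb{E}\bigl[G\, f(X_\ell,m_r)\mathbf{1}_A\bigr] = \mathbb{E}\bigl[G\, f(X_\ell,m_r)\bigl(\mathbf{1}_{A^\sigma \setminus A} - \mathbf{1}_{A \setminus A^\sigma}\bigr)\bigr].
\]
The only cumulative sum altered by $\sigma$ is $S_{\ell-1}$, whose new value is $S_{\ell-2}+X_\ell-1$; all other constraints defining $A$ are unchanged. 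Hence on $A \setminus A^\sigma$ one has $S_{\ell-2}=0$ and $X_\ell=0$, so $f(X_\ell,m_r)=f(0,m_r)=0$ and that term vanishes. On $A^\sigma \setminus A$ one has $S_{\ell-2}=0$, $X_{\ell-1}=0$, $X_\ell \geq 1$, and both $G$ and $f(X_\ell,m_r)$ are nonnegative, so the remaining contribution is nonnegative. Dividing through by $\mathbb{P}(A)>0$ yields the single-swap inequality and closes the induction.

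The key insight, and the main obstacle in finding the argument, is spotting that the hypothesis $f(0,m)=0$ is precisely what cancels the unfavorable term in the symmetric difference $A \triangle A^\sigma$, turning a nearly symmetric exchangeability computation into a clean monotonicity statement. This also gives an intuitive Cayley-tree interpretation: vertices visited earlier in breadth-first search tend to have strictly more children.
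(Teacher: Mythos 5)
Your proof is correct and follows essentially the same route as the paper: reduce by induction to a single decrement $i_r \mapsto i_r-1$, swap the two adjacent coordinates using exchangeability of the i.i.d.\ sequence, and observe that the hypothesis $f(0,m)=0$ annihilates exactly the configurations where the swap leaves the excursion event. The paper phrases the swap as an explicit bijection on the restricted set of paths $\mathcal{E}'_n$ rather than via the symmetric difference $A \triangle A^\sigma$, but this is only a cosmetic difference.
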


\begin{proof}[Proof of Lemma \ref{decroissance}]
Let $s := \min \{ r \geq 1: j_r < i_r \}$. We only need to treat the case where $i_r = j_r$ for all $r \neq s$ and $j := j_s = i_s-1$ (the general result can then be obtained by induction). Let $\sigma = (j~j+1) \in \mathfrak{S}_n$ be the permutation that transposes $j$ and $j+1$. Let $\mathcal{E}_n = \{ (x_1,\dots,x_n) \in \mathbb{N}^n: (x_1-1)+\dots+(x_t-1) = -1 \text{ iff } t=n \}$ and $\mathcal{E}'_n = \{ (x_1,\dots,x_n) \in \mathcal{E}_n : x_{j+1} > 0 \text{ or } (x_1-1)+\dots+(x_{j-1}-1)>0 \}$. Notice that $(x_1,\dots,x_n) \mapsto (x_{\sigma(1)},\dots,x_{\sigma(n)})$ is a bijection on $\mathcal{E}'_n$.
\begin{align}
    \nonumber
    \mathbb{E} [ f(X_{i_1},m_1) \cdots f(X_{i_k},m_k) \mathds{1}_{ \tau_{-1}=n+1}] & = \sum_{(x_1,\dots,x_n) \in \mathcal{E}_n} f(x_{i_1},m_1) \cdots f(x_{i_k},m_k) \mathbb{P} [ X_1=x_1, \dots, X_n = x_n ] \\ \nonumber
    & = \sum_{(x_1,\dots,x_n) \in \mathcal{E}'_n} f(x_{\sigma(i_1)},m_1) \cdots f(x_{\sigma(i_k)},m_k) \mathbb{P} [ X_1=x_1, \dots, X_n = x_n ] \\ \nonumber
    & + \sum_{(x_1,\dots,x_n) \in \mathcal{E}_n \setminus \mathcal{E}'_n} f(x_{i_1},m_1) \cdots f(x_{i_k},m_k) \mathbb{P} [ X_1=x_1, \dots, X_n = x_n ].
\end{align}
Notice that if $(x_1,\dots,x_n) \in \mathcal{E}_n \setminus \mathcal{E}'_n$ then $f(x_{j+1},m_{j+1}) = f(0,m_{j+1}) = 0$ ; in particular, since $f$ is nonnegative, $f(x_{i_1},m_1) \cdots f(x_{i_k},m_k) \leq f(x_{\sigma(i_1)},m_1) \cdots f(x_{\sigma(i_k)},m_k)$. Finally 
\begin{align}
    \nonumber
    \mathbb{E} [ f(X_{i_1},m_1) \cdots f(X_{i_k},m_k) \mathds{1}_{ \tau_{-1}=n+1}] & \leq \sum_{(x_1,\dots,x_n) \in \mathcal{E}_n} f(x_{\sigma(i_1)},m_1) \cdots f(x_{\sigma(i_k)},m_k) \mathbb{P} [ X_1=x_1, \dots, X_n = x_n ] \\ \nonumber
    & = \mathbb{E} [ f(X_{\sigma(i_1)},m_1) \cdots f(X_{\sigma(i_k)},m_k) \mathds{1}_{ \tau_{-1}=n+1}] \\ \nonumber
    & = \mathbb{E} [ f(X_{j_1},m_1) \cdots f(X_{j_k},m_k) \mathds{1}_{ \tau_{-1}=n+1}].
\end{align}
\end{proof}

\begin{rem}
In Lemma \ref{decroissance} we can for instance take $f(x,m) = x^m$ or $f(x,m) = (x)_m$. Notice that in Lemma \ref{decroissance} the indices $(i_r)_r$ must be pairwise distinct as well as the indices $(j_r)_r$. In the proof of Proposition \ref{prop majorare minorare} we extend the result, when $f(x,m)=x^m$, to the case where only the $(i_r)_r$ are pairwise distinct.
\end{rem}

\begin{proof}[Proof of Proposition \ref{prop majorare minorare}]
First we show the following inequality. Fix $n \geq k \geq 1$. Let $1 < i_1 < \dots < i_k \leq n$, $1 \leq j_1 \leq \dots \leq j_k \leq n$ be such that $j_r \leq i_r$ for all $1 \leq r \leq k$. Then
\begin{equation}
\label{eq preuve prop majorare minorare}
\mathbb{E}_n [ X_{i_1} \cdots X_{i_k} ] \leq \mathbb{E}_n [ X_{j_1} \cdots X_{j_k} ].
\end{equation}
To show (\ref{eq preuve prop majorare minorare}) it is actually enough to show the following result: let $J \subset \llbracket 1,n \rrbracket$ and $2 \leq i \leq n$ such that $i$ and $i-1$ do not belong to $J$. Let $m_j \geq 1$ for $j \in J$ and $m \geq 1$. Then
\begin{equation}
\label{young}
\mathbb{E}_n \left[ X_{i-1}^mX_{i} \prod_{j \in J} X_j^{m_j} \right] \leq \mathbb{E}_n \left[ X_{i-1}^{m+1} \prod_{j \in J} X_j^{m_j} \right].    
\end{equation}
Inequality (\ref{eq preuve prop majorare minorare}) can then be obtained by induction using Lemma \ref{decroissance} and (\ref{young}). By Young's inequality:
$$
X_{i-1}^mX_{i} \leq \frac{m}{m+1}X_{i-1}^{m+1} + \frac{1}{m+1}X_i^{m+1}.
$$
Combining this with Lemma \ref{decroissance} gives (\ref{young}) and concludes the proof of $(\ref{eq preuve prop majorare minorare})$. Now, using inequality (\ref{eq preuve prop majorare minorare}) we obtain
\begin{align*}
    i_1 \cdots i_k \, \mathbb{E}_n [ X_{i_1} \cdots X_{i_k} ] & \leq \sum_{\forall r ~ j_r \leq i_r} \mathbb{E}_n [ X_{j_1} \cdots X_{j_k} ] = \mathbb{E}_n [ (S_{i_1}+i_1) \cdots (S_{i_k}+i_k) ]
\end{align*}
which concludes the proof of Proposition \ref{prop majorare minorare}.

\end{proof}

\subsection{Bounding the moments of a random walk conditioned to be an excursion} \label{sec transfer bounds}

The goal of this section is to find bounds for the moments of the random walk $S$ conditioned to be an excursion. More precisely, the aim of this section is to show the following result.
\begin{prop}
\label{moments bounds}
There exists a constant $C > 0$ such that for all $n\geq2$, $1 \leq k \leq n-1$ and $d \geq 1$
\begin{enumerate}[(i)]
    \item   we have
            \begin{equation}
            \mathbb{E}_n \left[ S_k^d \right] \leq \left( C d n \right)^{d/2},
            \end{equation}
            
    \item   and
            \begin{equation}
            \mathbb{E}_n \left[ S_{n-k}^d \right] \leq \left( \frac{n}{n-k} \right)^{3/2}(Cdk)^{d/2},
            \end{equation}
            
    \item   as well as
            \begin{equation}
            \mathbb{E}_n \left[ S_k^d \right] \leq \left( \frac{n}{n-k} \right)^{3/2} \left( C d \sqrt{k} \right)^d.
            \end{equation}
\end{enumerate}
\end{prop}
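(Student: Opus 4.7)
The plan is to establish a pointwise upper bound on $\mathbb{P}_n(S_k = j)$ via the cycle lemma and a meander-type local estimate, then extract the three moment bounds by Gaussian integration.

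First I would decompose. Applying the Markov property at time $k$ and Kemperman's cycle lemma $\mathbb{P}(\tau_{-a}=m) = (a/m)\mathbb{P}(S_m=-a)$ (valid since the walk has $-1$ as its minimal step) both to $\mathbb{P}(\tau_{-j-1}=n+1-k)$ and to $\mathbb{P}(\tau_{-1}=n+1)$ gives, for $j \geq 0$ and $1 \leq k \leq n-1$,
\[
\mathbb{P}_n(S_k = j) = \frac{(n+1)(j+1)}{n+1-k}\cdot\frac{\mathbb{P}(S_k=j,\,\tau_{-1}>k)\,\mathbb{P}(S_{n+1-k}=-j-1)}{\mathbb{P}(S_{n+1}=-1)}.
\]

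Next I would bound the three factors. Since $S_m+m\sim\mathrm{Poisson}(m)$, Stirling's formula yields the quantitative local limit bound $\mathbb{P}(S_m=l) \leq (C/\sqrt m)\exp(-cl^2/(m+|l|))$ together with the matching lower bound $\mathbb{P}(S_{n+1}=-1) \geq c/\sqrt n$. The key ingredient is the sharper meander-type estimate
\[
\mathbb{P}(S_m=l,\,\tau_{-1}>m) \leq \frac{C(l+1)}{m^{3/2}}\exp\!\left(-c\frac{l^2}{m+l}\right), \qquad l \geq 0,
\]
whose extra $(l+1)/m$ ballot factor reflects the survival probability of a non-negative bridge of length $m$ with endpoint $l$. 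Plugging the three bounds into the decomposition and writing $\sigma_k^2 := k(n-k)/n$ yields the pointwise estimate
\[
\mathbb{P}_n(S_k=j) \leq \frac{C(j+1)^2(n+1)^{3/2}}{(n+1-k)^{3/2}\,k^{3/2}}\exp\!\left(-c\frac{j^2}{\sigma_k^2}\right).
\]

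Finally I would integrate. Summing $j^d$ against this pointwise bound reduces to the Gaussian integral $\int_0^\infty x^{d+2}e^{-cx^2/\sigma_k^2}dx = \sigma_k^{d+3}\Gamma((d+3)/2)/(2c^{(d+3)/2})$; Stirling's estimate $\Gamma((d+3)/2)\leq (Cd)^{d/2}$ and the cancellations of the $n-k$ factors produce the master estimate
\[
\mathbb{E}_n[S_k^d] \leq C^d d^{d/2}\, k^{d/2}\left(\frac{n-k}{n}\right)^{d/2},
\]
from which the three claimed inequalities follow by elementary algebra: (i) via $k^{d/2}((n-k)/n)^{d/2}\leq n^{d/2}$; (ii) by applying the master bound at position $n-k$ and using $((n-k)/n)^{(d+3)/2}\leq 1$; (iii) by keeping the $(n/(n-k))^{3/2}$ prefactor explicit and bounding $C^d d^{d/2}\leq (Cd)^d$. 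The principal technical obstacle is the ballot-type inequality $\mathbb{P}(\tau_{-1}>m\mid S_m=l)\leq C(l+1)/m$ for the Poisson random walk: the naive bound $\mathbb{P}(S_m=l,\tau_{-1}>m)\leq \mathbb{P}(S_m=l)$ loses this factor, and without it the master estimate would carry a spurious $\sqrt m$ that would invalidate (ii) and (iii). Securing it with uniform constants in the Poisson setting requires either a Kemperman-type identity for the joint probability or a coupling with a simple random walk bridge, for which the analogue $(l+1)/(m+1)$ of this ratio is exact.
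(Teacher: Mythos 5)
Your skeleton (Markov property at time $k$, Kemperman's cycle lemma applied to both hitting times, a ballot-type estimate for the walk conditioned to stay nonnegative, then integration) is essentially the paper's own route, and the ingredient you single out as the principal obstacle is in fact obtainable: the paper proves $\mathbb{P}(S_m=l,\,S_1,\dots,S_m\geq0)\leq C\,l/m^{3/2}$ by a cyclic-shift lemma combined with a decomposition at the last zero, so the ballot factor is not where the difficulty lies. The genuine error is the step where you merge the two exponential factors into a single Gaussian. From your (correct) ingredients the exponent you actually obtain is $c\bigl(j^2/(k+j)+(j+1)^2/(n-k+j)\bigr)$, and this is comparable to $j^2/\sigma_k^2$ with $\sigma_k^2=k(n-k)/n$ only in the range $j\lesssim\min(k,n-k)$; for $j\gg k$ the first term is of order $j$, not $j^2/k$, because the Poisson upper tail is sub-exponential, not sub-Gaussian, at that scale. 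Consequently your pointwise bound $\mathbb{P}_n(S_k=j)\leq C(j+1)^2n^{3/2}(n-k)^{-3/2}k^{-3/2}\exp(-cj^2/\sigma_k^2)$ is not just unproven but false (take $k=1$ and $j\approx\sqrt n$: the true probability is of order $1/j!=e^{-\Theta(j\log j)}$, far larger than $e^{-cj^2}$), and so is the master estimate $\mathbb{E}_n[S_k^d]\leq C^dd^{d/2}(k(n-k)/n)^{d/2}$: for fixed $k=1$ and $n\to\infty$, $S_1$ converges in law to a Poisson$(1)$ variable, whose $d$-th moments are the Bell numbers, of order $(d/\log d)^d$, which exceed $C^dd^{d/2}$ for large $d$ no matter the constant. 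This is exactly why item (iii) of the statement carries the weaker factor $(Cd\sqrt k)^d$, i.e.\ $d^d$, rather than $d^{d/2}$; your master estimate is strictly stronger than the proposition and cannot hold uniformly in $d$.

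The approach is repairable, but not by ``elementary algebra'' at the end: you must keep the Bennett-type exponent and treat the regime $j\gtrsim\min(k,n-k)$ separately (this exponential regime is what forces the $d^d$ in (iii)), or, for (ii), exploit that on the event $\tau_{-1}=n+1$ one has $S_{n-k}\leq k$, so the support cuts off the exponential regime. For comparison, the paper obtains (i) from a sub-Gaussian bound on the maximum of the whole excursion, (ii) from the pointwise bound $\mathbb{P}_n(S_{n-k}=x)\leq C\,(n/(k(n-k)))^{3/2}x^2e^{-\lambda x^2/k}$, whose Gaussian factor comes from the bridge over the remaining $k$ steps (and the support restriction), and (iii) from a tail bound that is only exponential at scale $\sqrt k$, which is precisely what produces the $d^d$ dependence there.
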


\begin{rem}
Proposition \ref{moments bounds} (i) and (ii) actually hold true for any random walk $(S_n)_{n \geq 0}$ starting from 0 with i.i.d. increments $\xi_1,\xi_2,\dots$ all having the distribution $\mu$ whose support is $\mathbb{N}\cup \{-1\}$ and such that $\mu$ has mean 0 and finite variance. However, (iii) uses in addition the fact that a Poisson random walk has a sub-exponential tail (see e.g. \cite[Example 3]{wellner}), namely, for all $k \geq 1$ and $x \geq 0$:
\begin{equation}
\label{eq bound poisson tail}
\mathbb{P}(S_k \geq x) \leq \exp \left( -\frac{x^2}{2(k+x/3)} \right) \leq C \exp \left( -\frac{x}{4k} \right).    
\end{equation}
\end{rem}

To prove Proposition \ref{moments bounds} we will use the following lemma whose proof is postponed at the end of this section. This lemma is similar to the cyclic lemma in spirit but instead of conditioning the walk to be an excursion we only condition it to stay positive.

\begin{lemme}
\label{lemme presque}
Let $n \geq 1$ and $F:\mathbb{R}^n \rightarrow [0,+\infty)$ be invariant under cyclic shifts. Then
\begin{equation}
\label{presque lemme cyclique}
\mathbb{E} \left[ F(\xi_1,\dots,\xi_n) \mathds{1}_{S_1,\dots,S_n > 0} \right] \leq \frac{1}{n}\mathbb{E} \left[ F(\xi_1,\dots,\xi_n) (S_n\wedge n) \mathds{1}_{S_n > 0} \right].
\end{equation}
\end{lemme}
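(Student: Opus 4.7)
The plan is to reduce the inequality to an almost-sure pointwise estimate via a cyclic-shift symmetry. For $j\in\{0,\dots,n-1\}$ set $\xi^{(j)}:=(\xi_{j+1},\dots,\xi_n,\xi_1,\dots,\xi_j)$ and let $A_j$ denote the event that $\xi^{(j)}$ has all partial sums strictly positive. Extending $(S_m)$ periodically by $S_{m+n}:=S_m+S_n$, a direct computation shows that the $k$-th partial sum of $\xi^{(j)}$ equals $S_{j+k}-S_j$, hence
\begin{equation*}
A_j\;=\;\bigl\{S_m>S_j\text{ for every }m\in\{j+1,\dots,j+n\}\bigr\}.
\end{equation*}
Since $F$ is cyclically invariant and $\xi_1,\dots,\xi_n$ are i.i.d., each expectation $\mathbb E[F(\xi^{(j)})\mathds{1}_{A_j}]$ equals $\mathbb E[F(\xi_1,\dots,\xi_n)\mathds{1}_{S_1,\dots,S_n>0}]$, so summing over $j$ turns $n$ times the left-hand side of the lemma into $\mathbb E[F(\xi_1,\dots,\xi_n)\sum_{j=0}^{n-1}\mathds{1}_{A_j}]$. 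It therefore suffices to establish the pointwise bound
\begin{equation*}
N:=\sum_{j=0}^{n-1}\mathds{1}_{A_j}\;\leq\;(S_n\wedge n)\,\mathds{1}_{S_n>0}.
\end{equation*}

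For $S_n\leq 0$ this is automatic, since the last partial sum of any cyclic shift equals $S_n$. Assume $S_n>0$ and enumerate the valid indices as $0\leq j_1<\dots<j_r\leq n-1$. Applying the $A_{j_i}$-condition at $m=j_{i+1}$ gives $S_{j_{i+1}}>S_{j_i}$, while applying the $A_{j_r}$-condition at $m=n+j_1$ gives $S_{n+j_1}=S_{j_1}+S_n>S_{j_r}$. Because $(S_m)$ is $\mathbb Z$-valued, these strict inequalities chain to
\begin{equation*}
r-1\;\leq\; S_{j_r}-S_{j_1}\;\leq\; S_n-1,
\end{equation*}
so $r\leq S_n$; combined with the trivial $r\leq n$ this yields $N\leq S_n\wedge n$.

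The decisive step is this pointwise count, whose validity genuinely requires integrality of the walk: without it, e.g.\ taking $n=2$ and $\xi=(\tfrac12,\tfrac12)$ yields $N=2>1=S_n$, violating the target inequality. The Poisson assumption on the $X_i$, through $\xi_i=X_i-1\in\mathbb Z$, is precisely what produces the one-unit gaps exploited in the telescoping argument; this is the only place integrality enters.
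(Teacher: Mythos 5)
Your proof is correct and follows essentially the same route as the paper: symmetrize over the $n$ cyclic shifts using the i.i.d.\ increments and the cyclic invariance of $F$, then bound the number of shifts with all partial sums strictly positive pointwise by $(S_n\wedge n)\mathds{1}_{S_n>0}$. The only difference is that you spell out the integer-valued chaining argument for this counting bound, which the paper merely asserts, and your observation that integrality of the increments is what makes it work is accurate.
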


\begin{proof}[Proof of Proposition \ref{moments bounds}]
We recall that $C$ denotes a constant which may vary line to line. For (i), according to \cite[Eq.\,(32)]{addario} the maximum of the excursion of $S$ has a sub-Gaussian tail, namely there exist constants $C, \alpha > 0$ such that for all $n \geq 1$ and $x \geq 0$:
    $$
    \mathbb{P}_n \left( M_n \geq x   \right) \leq Ce^{-\alpha x^2/n}
    $$
    where $M_n := \max \{ S_0,\dots,S_n \}$ is the maximum of the walk $S$ on $[0,n]$. So we have
    \begin{align*}
    \mathbb{E}_n \left[ n^{-d/2} S_k^d \right] \leq \mathbb{E}_n \left[ n^{-d/2} M_n^d \right] = \int_0^\infty dx^{d-1}\mathbb{P}_n \left( M_n \geq \sqrt{n}x  \right) dx 
     \leq \int_0^\infty Cdx^{d-1} e^{-\alpha x^2}dx 
     \leq C^d d^{d/2}.
    \end{align*}
This shows (i). 

\medskip

\noindent The following computation is a common starting point to show both (ii) and (iii). Let $H(S_k,x)$ be either the indicator function $\mathds{1}_{S_k=x}$ or $\mathds{1}_{S_k\geq x}$ with $x>0$. Using the fact that $\mathbb{P} ( \tau_{-1}=n+1)$ is equivalent to a constant times $n^{-3/2}$ (see e.g. \cite[Eq.\,(10)]{legall}), then the Markov property and finally the cyclic lemma (see e.g. \cite[Section\,6.1]{pitmanbook}) we get
\begin{align}
    \nonumber
    \mathbb{E}_n \left[ H(S_k,x) \right] & \leq C \, n^{3/2} \, \mathbb{E} \left[ H(S_k,x) \, \mathds{1}_{ \tau_{-1}=n+1} \right] \\ \nonumber
    & \leq C \, n^{3/2} \, \mathbb{E} \left[ H(S_k,x) \, \mathds{1}_{S_1,\dots,S_k \geq 0} \, \mathbb{P} \left( \tau_{-1-S_k}=n-k \right) \right] \\ \nonumber
    & \leq C \, n^{3/2} \, \mathbb{E} \left[ H(S_k,x) \, \mathds{1}_{S_1,\dots,S_k \geq 0} \, \frac{1+S_k}{n-k} \, \mathbb{P}\left( S'_{n-k}=-1-S_k \right) \right]
\end{align}
where $S'$ is independent of $S$ and has the same distribution. Now we use Janson's inequality \cite[Lemma 2.1]{janson} which states that $\mathbb{P} (S_r = -m) \leq Cr^{-1/2}e^{-\lambda m^2/r}$ for all $r \geq 1$ and $m \geq 0$ to get
\begin{align}
\label{eq preuve lemme borne}
\nonumber
\mathbb{E}_n \left[ H(S_k,x) \right] & \leq  C \, n^{3/2} \, \mathbb{E} \left[ H(S_k,x) \, \mathds{1}_{S_1,\dots,S_k \geq 0} \, \frac{1+S_k}{(n-k)^{3/2}} \, e^{-\lambda\frac{(1+S_k)^2}{n-k}} \right] \\
& \leq C \left(\frac{n}{n-k}\right)^{3/2} \mathbb{E} \left[ H(S_k,x) \, \mathds{1}_{S_1,\dots,S_k \geq 0} \, S_k \, e^{-\lambda\frac{S_k^2}{n-k}} \right].
\end{align}
To prove (ii) we first  define $T_k := \max \{ 0 \leq i \leq k: S_i = 0 \}$, thus, using the Markov property, we obtain
\begin{align*}
\mathbb{P} \left( S_k = x, \, S_1,\dots,S_k \geq 0 \right) &= \sum_{i=0}^{k-1} \mathbb{P} \left( S_k = x, \, S_1,\dots,S_k \geq 0, \, T_k=i \right) \\
&= \sum_{i=0}^{k-1} \mathbb{P} \left(S_1,\dots,S_{i} \geq 0, \, S_i = 0 \right) \mathbb{P} \left(S_{k-i} = x, \, S_1,\dots,S_{k-i} > 0 \right).
\end{align*}
We apply Lemma \ref{lemme presque} and the local limit theorem (see e.g. \cite[Theorem\,4.2.1]{ibragimov}), which gives a constant $C>0$ such that $\mathbb{P}(S_{k-i}=x) \leq C(k-i)^{-1/2}$ for every $k>i$ and $x \in \mathbb Z$,
\begin{align*}
\mathbb{P} \left( S_k = x, \, S_1,\dots,S_k \geq 0 \right) &\leq \sum_{i=0}^{k-1} \mathbb{P} \left(S_1,\dots,S_{i} \geq 0, \, S_i = 0 \right) \frac{x}{k-i} \mathbb{P} (S_{k-i}=x) \\
&\leq C \, x\sum_{i=0}^{k-1} \mathbb{P} \left(S_1,\dots,S_{i} \geq 0, \, S_i = 0 \right) \frac{1}{(k-i)^{3/2}}.
\end{align*}
Notice that 
$$
\mathbb{P} \left(S_1,\dots,S_{i} \geq 0, \, S_i = 0 \right) = e \, \mathbb{P}(\tau_{-1}=i+1) \leq \frac{C}{(i+1)^{3/2}}. 
$$
So, finally we have
$$
\mathbb{P} \left( S_k = x, \, S_1,\dots,S_k \geq 0 \right) \leq C \, x\sum_{i=0}^{k-1} \frac{1}{((i+1)(k-i))^{3/2}} = \frac{C\,x}{(k+1)^{3/2}}\sum_{i=1}^{k-1} \left( \frac{1}{i+1} + \frac{1}{k-i} \right)^{3/2} \leq \frac{C\,x}{k^{3/2}}.
$$
Putting the previous inequality in (\ref{eq preuve lemme borne}) with $H(S_{k},x)=\mathds{1}_{S_{k}=x}$ and replacing $k$ by $n-k$ gives
$$
\mathbb{P}_n(S_{n-k}=x) \leq C \left[ \frac{n}{k(n-k)} \right]^{3/2} x^2 e^{-\lambda \frac{x^2}{k}}.
$$
We can now bound the $d$-th moment of $S_{n-k}$
\begin{align*}
    \mathbb{E}_n[S_{n-k}^d] \leq C \left[ \frac{n}{k(n-k)} \right]^{3/2} \int_0^\infty x^{d+2}e^{-\lambda \frac{x^2}{k}}dx &\leq C^d k^{d/2} \left[ \frac{n}{n-k} \right]^{3/2} \int_0^\infty x^{d+2}e^{-x^2}dx \\
    &\leq C^d k^{d/2} \left[ \frac{n}{n-k} \right]^{3/2} d^{d/2}.
\end{align*}
This concludes the proof of (ii).

\medskip

\noindent To prove (iii) we follow the same principle. Using the Markov property and Lemma \ref{lemme presque} we have
\begin{align*}
\mathbb{E} \left[ \mathds{1}_{S_k \geq x} \, \mathds{1}_{S_1,\dots,S_k \geq 0} \, S_k \right] &= \sum_{i=0}^{k-1} \mathbb{E} \left[ \mathds{1}_{S_k \geq x} \, \mathds{1}_{S_1,\dots,S_k \geq 0} \, S_k \, \mathds{1}_{T_k=i} \right] \\
&= \sum_{i=0}^{k-1} \mathbb{P} \left(S_1,\dots,S_{i} \geq 0, \, S_i = 0 \right) \mathbb{E} \left[\mathds{1}_{S_{k-i} \geq x} \, \mathds{1}_{S_1,\dots,S_{k-i} > 0} \, S_{k-i} \right] \\ 
&\leq \sum_{i=0}^{k-1} \mathbb{P} \left(S_1,\dots,S_{i} \geq 0, \, S_i=0 \right) \frac{1}{k-i} \mathbb{E} \left[\mathds{1}_{S_{k-i} \geq x} \, S_{k-i}^2 \right].
\end{align*}
Then, we apply the Cauchy-Schwarz inequality 
$$
\mathbb{E} \left[\mathds{1}_{S_{k-i} \geq x} \, S_{k-i}^2 \right] \leq \mathbb{P}(S_{k-i} \geq x)^{1/2} \mathbb{E} \left[ S_{k-i}^4 \right]^{1/2} \leq C(k-i)\mathbb{P}(S_{k-i} \geq x)^{1/2}.
$$
The last inequality comes from an explicit computation of the fourth central moment of a Poisson distribution. We combine the last inequality with (\ref{eq bound poisson tail}) to get
$$
\mathbb{E} \left[\mathds{1}_{S_{k-i} \geq x} \, S_{k-i}^2 \right] \leq C(k-i) e^{-\frac{x}{8(k-i)}} \leq C(k-i)e^{-\frac{x}{8k}} .
$$
Putting everything together we obtain
\begin{align*}
\mathbb{E} \left[ \mathds{1}_{S_k \geq x} \, \mathds{1}_{S_1,\dots,S_k \geq 0} \, S_k \right] &\leq C e^{-\frac{x}{8k}} \sum_{i=0}^{k-1} \mathbb{P} \left(S_1,\dots,S_{i} \geq 0, \, S_i=0 \right) \\
&\leq C e^{-\frac{x}{8k}} \sum_{i=0}^{k-1} e \, \mathbb{P} \left(\tau_{-1}=i+1\right) \leq C e^{-\frac{x}{8k}}.
\end{align*}
We combine the last inequality with (\ref{eq preuve lemme borne}) to get
\begin{align*}
    \mathbb{E}_n[k^{-d/2}S_k^d] = \int_0^\infty dx^{d-1} \mathbb{P}_n(S_k \geq \sqrt{k}x)dx
    \leq C \left[ \frac{n}{n-k} \right]^{3/2} \int_0^\infty dx^{d-1} e^{-\frac{x}{8}}dx
    \leq \left[ \frac{n}{n-k} \right]^{3/2} C^d d^d.
\end{align*}
This concludes the proof of (iii).

\end{proof}

We now prove Lemma \ref{lemme presque}

\begin{proof}[Proof of Lemma \ref{lemme presque}]
For $x = (x_1,\dots,x_n) \in \mathbb{R}^n$ and $i \in \llbracket 0,n-1 \rrbracket$ denote by $x^i$ the $i$-th cyclic permutation of $x$, namely $x^i := (x_{1+i},\dots,x_{n},x_1,\dots,x_i)$. Set $A_n := \{ (x_1,\dots,x_n) \in (\mathbb{N}\cup\{-1\})^n: \forall k \in \llbracket 1,n \rrbracket, \sum_{i=1}^k x_i > 0 \}$. We also set $\xi := (\xi_1,\dots,\xi_n)$. Then
\begin{align}
    \nonumber
    \mathbb{E} \left[ F(\xi_1,\dots,\xi_n) \mathds{1}_{S_1,\dots,S_n > 0} \right] = \mathbb{E} \left[ F(\xi) \mathds{1}_{\xi \in A_n} \right] & = \frac{1}{n} \sum_{i=0}^{n-1} \mathbb{E} \left[ F(\xi^i) \mathds{1}_{\xi^i \in A_n} \right] \\ \nonumber
    & = \frac{1}{n}  \mathbb{E} \left[ F(\xi) \sum_{i=0}^{n-1}\mathds{1}_{\xi^i \in A_n} \right] \\ \nonumber
    & \leq \frac{1}{n}  \mathbb{E} \left[ F(\xi) (S_n\wedge n) \mathds{1}_{S_n > 0} \right].
\end{align}
The inequality comes from the fact that the number of cyclic shifts of $\xi$ such that $\xi^i \in A_n$ is almost surely bounded by $(S_n\wedge n) \mathds{1}_{S_n > 0}$.  
\end{proof}

\subsection{Proof of Theorem \ref{main thm} (i)}

Recall we want to show that $d_{TV}(k,n) = O(k/\sqrt{n})$. In section \ref{sec proba distinct} we have shown that it is enough to show $\delta(k,n) = O(k/\sqrt{n})$ where the definition of $\delta(k,n)$ is given by (\ref{dtv'}). Thanks to Proposition \ref{reformulating prop} this quantity can be rewritten as
$$
\delta(k,n) = \frac{n^k(n-k)!}{n!} \int_{\Lambda_k} \left[ \mathbb{E}_n \left[ X_{\lceil nt_1 \rceil} \cdots X_{\lceil nt_k \rceil} \right] -1 \right] dt_1 \cdots dt_k
$$
where $\Lambda_k := \{(t_1,\dots,t_k) \in (0,1]^k: (\lceil nt_1 \rceil,\dots,\lceil nt_k \rceil) \in D_n \cap G_n\}$. As was already mentioned in section \ref{sec proba distinct} $n^k(n-k)!/n! = 1 + O(k/\sqrt n)$ so, for our purpose, it is sufficient to bound the integral:
$$
I(k,n) := \int_{\Lambda_k} \left[ \mathbb{E}_n \left[ X_{\lceil nt_1 \rceil} \cdots X_{\lceil nt_k \rceil} \right] -1 \right] dt_1 \cdots dt_k.
$$
Using inequality (\ref{majorare}) we obtain 
\begin{align}
\label{eq borne ikn}
    I(k,n) \leq & \int_{[0,1]^k} \left[ \mathbb{E}_n \left[ \left( \frac{S_{\lceil nt_1 \rceil}}{\lceil nt_1 \rceil} + 1 \right) \cdots \left( \frac{S_{\lceil nt_k \rceil}}{\lceil nt_1 \rceil} + 1 \right) \right] -1 \right] dt_1 \cdots dt_k.
\end{align}
Notice that Proposition \ref{moments bounds} (i) and (iii) imply that for all $0 \leq k \leq n$
$$
\mathbb{E}_n \left[ S_k^d \right] \leq \left( C d \sqrt{k} \right)^d
$$
Hölder's inequality then shows that for $0 \leq i_1,\dots,i_d \leq n$
\begin{equation}
\label{eq bound1}
\mathbb{E}_n \left[S_{i_1} \cdots S_{i_d}   \right] \leq  (Cd)^d \sqrt{i_1 \cdots i_d}.
\end{equation}
Expanding the products in (\ref{eq borne ikn}) and using (\ref{eq bound1}) gives
$$
I(k,n) \leq \sum_{d=1}^k \binom{k}{d} (Cd)^d n^{-d/2} \int_{[0,1]^d} \frac{dt_1 \cdots dt_d}{(t_1 \cdots t_d)^{1/2}} .
$$
Notice that $t \mapsto t^{-1/2}$ is integrable so
$$
I(k,n) \leq \sum_{d=1}^k \binom{k}{d} (Cd)^d n^{-d/2}.
$$
Using the bound $\binom{k}{d} \leq (ke/d)^d$,
$$
I(k,n) \leq \sum_{d=1}^k \left(Ce\frac{k}{\sqrt{n}}\right)^d.
$$
Since $k = o(\sqrt{n})$ we conclude that $I(k,n) = O(k/\sqrt{n})$.

\section{Convergence for the Kolmogorov distance} \label{sec conv for k dist}

In this section we suppose that $k_n = o(n)$ and $\sqrt{n}=o(k_n)$. We will write $k$ instead of $k_n$ to make notation lighter but keep in mind that $k$ depends on $n$. The goal of this section is to show Theorem \ref{main thm} (ii). The following Lemma allows us to replace the cumulative probability in (\ref{dkol}) with the term $\mathbb{E}_n[(S_{i_1}+i_1)\cdots(S_{i_n}+i_n)]$ which is more manageable.

\begin{lemme}
\label{lemme kol}
There is a constant $C>0$ such that
\begin{equation}
    d_K(k,n) \leq \frac{1}{n^k} \max_{1 \leq i_1\dots i_k \leq n} \left| \mathbb{E}_n [ (S_{i_1}+i_1)\cdots(S_{i_k}+i_k) ] - i_1\cdots i_k \right| + \frac{Ck}{n}.
\end{equation}
\end{lemme}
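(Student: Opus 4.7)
The plan is to reformulate both $\mathbb{P}(\pi_n(j)\le i_j\ \forall j)$ and $\mathbb{E}_n[\prod(S_{i_j}+i_j)]/n^k$ as probabilities of the same event under two different sampling schemes on the uniform Cayley tree $T=T_{n+1}$, and then bound the resulting gap pointwise. Set $A_j(T):=\{v\in\{1,\dots,n\}:r(v,T)\le i_j\}$, so that $|A_j(T)|=X_1+\cdots+X_{i_j}=S_{i_j}+i_j$. Combining Theorem~\ref{thm bij} with the exchangeability of non-root labels of $T$ yields
\[\mathbb{P}(\pi_n(j)\le i_j\ \forall j)=\mathbb{E}_n\!\left[\mathbb{P}(V_j\in A_j(T)\ \forall j\mid T)\right]\]
for $V_1,\dots,V_k$ sampled uniformly \emph{without} replacement from $\{1,\dots,n\}$, whereas for $W_1,\dots,W_k$ i.i.d.\ uniform on $\{1,\dots,n\}$,
\[\tfrac{1}{n^k}\mathbb{E}_n\!\left[\prod_j(S_{i_j}+i_j)\right]=\mathbb{E}_n\!\left[\mathbb{P}(W_j\in A_j(T)\ \forall j\mid T)\right].\]
The difference of these two quantities is $\mathbb{E}_n[D(T)]$, where $D(T):=\mathbb{P}(W\in A\mid T)-\mathbb{P}(V\in A\mid T)$ is the conditional gap between with- and without-replacement sampling.

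Since any permutation of a parking function is a parking function, $\pi_n$ is exchangeable, and the two sides of the inequality to prove are symmetric in $(i_1,\dots,i_k)$; I may therefore assume $i_1\le\cdots\le i_k$, in which case the sets $A_j(T)$ are nested. Writing $a_j:=|A_j(T)|$ and counting distinct $k$-tuples $(v_1,\dots,v_k)$ with $v_j\in A_j$ using the nesting,
\[D(T)=\prod_{j=1}^k\frac{a_j}{n}-\prod_{j=1}^k\frac{a_j-j+1}{n-j+1}\ge 0.\]
A telescoping expansion, swapping coordinates one at a time from $W$ to $V$, gives
\[D(T)=\sum_{j=1}^k\frac{(j-1)(n-a_j)}{n(n-j+1)}\cdot\frac{\prod_{m<j}(a_m-m+1)\prod_{\ell>j}a_\ell}{(n)_{j-1}\,n^{k-j}}.\]
The elementary inequality $(a_m-m+1)/(n-m+1)\le a_m/n$ (valid for $a_m\le n$) bounds the trailing product factor by $\prod_{m\ne j}(a_m/n)$, and with $x_j:=a_j/n\in[0,1]$ and $n-j+1\ge n-k+1$,
\[D(T)\le\frac{1}{n-k+1}\sum_{j=1}^k(j-1)(1-x_j)\prod_{m\ne j}x_m.\]

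The crucial step, which saves the extra factor of $k$ over the naive Bonferroni bound $O(k^2/n)$, is a Bernoulli interpretation of the weighted sum. Viewing each $x_j$ as the success probability of an independent Bernoulli trial, the quantity $(1-x_j)\prod_{m\ne j}x_m$ is precisely the probability that the $j$-th trial is the \emph{unique} failure; these events are pairwise disjoint, so
\[\sum_{j=1}^k(1-x_j)\prod_{m\ne j}x_m\le 1.\]
Combined with $(j-1)\le k-1$ and the assumption $k=o(n)$ (which ensures $n-k+1\ge n/2$ for $n$ large), this yields $D(T)\le(k-1)/(n-k+1)\le Ck/n$ uniformly in $T$. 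Taking expectations gives $\mathbb{E}_n[D(T)]\le Ck/n$, and the triangle inequality then produces
\[\left|\mathbb{P}(\pi_n(j)\le i_j\ \forall j)-\frac{\prod i_j}{n^k}\right|\le\frac{|\mathbb{E}_n[\prod(S_{i_j}+i_j)]-\prod i_j|}{n^k}+\frac{Ck}{n};\]
taking the maximum over $(i_1,\dots,i_k)$ concludes. The main technical hurdle is the telescoping identity, and above all the recognition that the ``at most one failure'' probabilistic interpretation is what provides the tight $O(k)$ (rather than $O(k^2)$) bound on the weighted sum.
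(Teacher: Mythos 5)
Your proof is correct, but it takes a genuinely different route from the paper. The paper gets the $Ck/n$ term by citing Bobkov's theorem on exchangeable extensions (\cite[Theorem 1.1]{bobkov}) applied to the exchangeable vector $(\pi_n(1),\dots,\pi_n(n))$, which bounds the gap between the joint distribution function and $\mathbb{E}[F_n(i_1)\cdots F_n(i_k)]$; it then identifies $\mathbb{E}[F_n(i_1)\cdots F_n(i_k)]$ with $n^{-k}\,\mathbb{E}_n[(S_{i_1}+i_1)\cdots(S_{i_k}+i_k)]$ exactly as you do, via the Chassaing--Marckert bijection and Corollary \ref{corner stone}. What you do instead is reprove, in this special situation, the needed instance of Bobkov's inequality from scratch: after ordering the $i_j$ (legitimate, since both sides are symmetric) the sets $A_j(T)$ are nested, the without-replacement probability is exactly $\prod_j (a_j-j+1)/(n-j+1)$, and your telescoping plus the ``unique failure'' disjointness bound $\sum_j (1-x_j)\prod_{m\neq j}x_m \leq 1$ is precisely what upgrades the naive $O(k^2/n)$ to the required $O(k/n)$ uniformly in $T$ (indeed only the sizes $a_j$, which by Corollary \ref{corner stone} have the law of $S_{i_j}+i_j$, enter). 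The paper's route is shorter because the combinatorial work is outsourced to a black-box reference; yours is self-contained and makes transparent that the $k/n$ error is just a with- versus without-replacement sampling effect. Two cosmetic remarks: your $\mathbb{E}_n[\,\cdot \mid T]$ notation conflates the conditioned-walk measure with the expectation over the uniform Cayley tree (harmless once one notes $D(T)$ depends only on $(a_1,\dots,a_k)$), and the restriction $k=o(n)$ is not really needed, since for $k\geq n/2$ the inequality is trivial from $d_K\leq 1$ once $C\geq 2$, while for $k\leq n/2$ your bound $(k-1)/(n-k+1)\leq 2k/n$ holds for all $n$.
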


Before proving this Lemma we show how it implies Theorem \ref{main thm} (ii). We will also need the following simple Lemma which extends \cite[Eq.\,(27.5)]{billingsley} 
\begin{lemme}
\label{lemma billingsley}
Let $r \geq 1$, $w_1,\dots,w_r$ and $z_1,\dots,z_r$ be complex numbers of modulus smaller than, respectively, $a>0$ and $b>0$. Then
\begin{equation}
    \left| \prod_{i=1}^r w_i - \prod_{i=1}^r z_i \right| \leq \sum_{i=1}^r |w_i - z_i| a^{r-i} b^{i-1}.
\end{equation}
\end{lemme}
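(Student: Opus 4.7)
The natural approach is to write the difference $\prod_i w_i - \prod_i z_i$ as a telescoping sum whose $i$-th term isolates the factor $w_i - z_i$, with $z$-factors standing to its left and $w$-factors standing to its right. This ordering is what makes the exponents come out as $a^{r-i}$ (from the $w$'s) and $b^{i-1}$ (from the $z$'s), matching the statement.

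Concretely, for $i \in \{1,\dots,r+1\}$ set
\begin{equation*}
T_i := \left(\prod_{j=1}^{i-1} z_j\right)\left(\prod_{j=i}^{r} w_j\right),
\end{equation*}
with the convention that an empty product equals $1$, so that $T_1 = \prod_{j=1}^r w_j$ and $T_{r+1} = \prod_{j=1}^r z_j$. A direct calculation gives
\begin{equation*}
T_i - T_{i+1} = \left(\prod_{j=1}^{i-1} z_j\right)(w_i - z_i)\left(\prod_{j=i+1}^{r} w_j\right),
\end{equation*}
and summing over $i=1,\dots,r$ telescopes to
\begin{equation*}
\prod_{i=1}^r w_i - \prod_{i=1}^r z_i = \sum_{i=1}^r \left(\prod_{j=1}^{i-1} z_j\right)(w_i - z_i)\left(\prod_{j=i+1}^{r} w_j\right).
\end{equation*}

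The rest is immediate: apply the triangle inequality to the telescoped expression and then bound $|z_j| \leq b$ for $j<i$ and $|w_j| \leq a$ for $j>i$ inside each summand, which produces exactly the claimed upper bound. There is essentially no obstacle; the only thing one has to be careful with is the asymmetric placement of $z$'s and $w$'s in the definition of $T_i$, since reversing it would give the bound with $a$ and $b$ swapped. (Alternatively, one could prove the identity by induction on $r$, using the decomposition $\prod_{i=1}^r w_i - \prod_{i=1}^r z_i = w_r\bigl(\prod_{i=1}^{r-1} w_i - \prod_{i=1}^{r-1} z_i\bigr) + (w_r - z_r)\prod_{j=1}^{r-1} z_j$ and invoking the inductive hypothesis on the first summand.)
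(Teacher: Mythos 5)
Your proof is correct and is essentially the paper's argument: the telescoping sum $\sum_{i=1}^r \bigl(\prod_{j<i} z_j\bigr)(w_i-z_i)\bigl(\prod_{j>i} w_j\bigr)$ is exactly what one gets by iterating the paper's one-step identity $\prod_i w_i - \prod_i z_i = (w_1-z_1)\prod_{i\geq 2} w_i + z_1\bigl(\prod_{i\geq 2} w_i - \prod_{i\geq 2} z_i\bigr)$, and your inductive alternative is the same recursion read from the other end. The bounding step via the triangle inequality and $|z_j|\leq b$, $|w_j|\leq a$ matches the paper's conclusion.
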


\begin{proof}[Proof of Lemma \ref{lemma billingsley}]
 The result readily follows from the identity
 $$
 \prod_{i=1}^r w_i - \prod_{i=1}^r z_i = (w_1-z_1)\prod_{i=2}^r w_i + z_1\left(\prod_{i=2}^r w_i - \prod_{i=2}^r z_i\right).
 $$
\end{proof}

\begin{proof}[Proof of Theorem \ref{main thm} (ii)]
Let $1/2 < \alpha < 1$. We define a sequence of intervals $I_1,\dots,I_{M+3}$ (depending on $n$) in the following way. 
\begin{multline*}
I_1 := \bigl[1,n-k\bigr),~~ I_2 := \bigl[n-k,n-k^\alpha\bigr),~~ I_3 := \left[n-k^\alpha,n-k^{\alpha^2}\right),~~ \dots\\ \dots,~~ I_{M+1} := \left[n-k^{\alpha^{M-1}},n-k^{\alpha^M}\right),~~ I_{M+2} := \left[n-k^{\alpha^M},n-n/k\right),~~ I_{M+3}:= \bigl[n-n/k,n\bigr]    
\end{multline*}
where $M$ is the biggest integer such that $n-k^{\alpha^M} \leq n-n/k$ so in particular $M = O(\log \log n)$. Let $1 \leq i_1,\dots,i_k \leq n$, using Lemma \ref{lemma billingsley} (with $a=b=1$ and noticing that $(S_i+i)/n$ under $\mathbb{P}_n$ is almost surely smaller than 1 for every $i$) we decompose the quantity $\mathbb{E}_n [ (S_{i_1}+i_1)\cdots(S_{i_k}+i_k) - i_1\cdots i_k]$ depending on what intervals the $i_j$'s belong to.
\begin{equation}
\label{eq separation interval}
\mathbb{E}_n \left[ \left(\frac{S_{i_1}+i_1}{n}\right)\cdots \left(\frac{S_{i_k}+i_k}{n}\right) - \frac{i_1}{n}\cdots\frac{i_k }{n} \right] \leq \sum_{m=1}^{M+3} \mathbb{E}_n \left[\prod_{i_j \in I_m} \frac{S_{i_j}+i_j}{n} - \prod_{i_j \in I_m} \frac{i_j}{n} \right].    
\end{equation}
Fix $m \in \{1,\dots,M+3\}$ and denote by $\iota_1,\dots,\iota_{r_m}$ the $i_j$'s that belong to $I_m$. If $m=1$ then by Lemma \ref{lemma billingsley} and Proposition \ref{moments bounds} (i)
\begin{align*}
\mathbb{E}_n \left[\prod_{i_j \in I_1} \frac{S_{i_j}+i_j}{n} - \prod_{i_j \in I_1} \frac{i_j}{n} \right] & \leq \frac{1}{n}\sum_{j=1}^{r_1} \mathbb{E}_n [S_{\iota_j}] \left(\frac{n-k}{n}\right)^{j-1} \\
& \leq \frac{C\sqrt{n}}{n} \frac{n}{k} = \frac{C\sqrt{n}}{k}.
\end{align*}
If $2 \leq m \leq M+2$ we follow the same principle but we use Proposition \ref{moments bounds} (ii) instead
\begin{align*}
\mathbb{E}_n \left[\prod_{i_j \in I_m} \frac{S_{i_j}+i_j}{n} - \prod_{i_j \in I_m} \frac{i_j}{n} \right] & \leq \frac{1}{n}\sum_{j=1}^{r_m} \mathbb{E}_n [S_{\iota_j}] \left(\frac{n-k^{\alpha^{m-1}}}{n}\right)^{j-1} \\
& \leq \frac{Ck^{\alpha^{m-2}/2}}{n} \frac{n}{k^{\alpha^{m-1}}} = \frac{C}{k^{(\alpha-1/2)\alpha^{m-2}}}.
\end{align*}
And finally if $m=M+3$
\begin{align*}
\mathbb{E}_n \left[\prod_{i_j \in I_{M+3}} \frac{S_{i_j}+i_j}{n} - \prod_{i_j \in I_{M+3}} \frac{i_j}{n} \right] & \leq \frac{1}{n}\sum_{j=1}^{r_{M+3}} \mathbb{E}_n [S_{\iota_j}] \\
& \leq r_{M+3}\frac{C}{n} \left(\frac{n}{k}\right)^{1/2} \leq C\frac{\log \log(n)}{(nk)^{1/2}}.
\end{align*}
Notice that $k^{\alpha^M} \geq n/k$ so for all $0 \leq m \leq M$, $k^{\alpha^{M-m}} \geq (n/k)^{\alpha^{-m}}$. Summing the preceding bounds for $2 \leq m \leq M+2$ gives
\begin{align*}
    \sum_{m=2}^{M+2} \mathbb{E}_n \left[\prod_{i_j \in I_m} \frac{S_{i_j}+i_j}{n} - \prod_{i_j \in I_m} \frac{i_j}{n} \right] &\leq C\sum_{m=0}^{M} \frac{1}{k^{(\alpha-1/2)\alpha^{m}}} = C\sum_{m=0}^{M} \frac{1}{k^{(\alpha-1/2)\alpha^{M-m}}} \leq C\sum_{m=0}^{M} \left( \frac{k}{n} \right)^{(\alpha-1/2)\alpha^{-m}} \\
    &\leq C\left( \frac{k}{n} \right)^{(\alpha-1/2)} +C\sum_{m=1}^{M} \left( \frac{k}{n} \right)^{-(\alpha-1/2)e\ln(\alpha)m} \\
    &\leq C\left( \frac{k}{n} \right)^{(\alpha-1/2)} + C\left( \frac{k}{n} \right)^{-(\alpha-1/2)e\ln(\alpha)}
\end{align*}
where we used the fact that $-e\ln(\alpha)m \leq 1/\alpha^m$ for all $m \geq 1$. If we take $\alpha -1/2 = \exp(-e^{-1}) -1/2 \simeq 0.1922$ then the last quantity is a $O(k/n)^{0.19}$. Putting everything together we finally obtain that
$$
\frac{1}{n^k} \max_{1 \leq i_1\dots i_k \leq n} \left| \mathbb{E}_n [ (S_{i_1}+i_1)\cdots(S_{i_k}+i_k) ] - i_1\cdots i_k \right| \leq C\frac{\sqrt{n}}{k} + C \left( \frac{k}{n} \right)^{0.19}+C\frac{\log \log(n)}{(nk)^{1/2}}.
$$
Combining the last display with Lemma \ref{lemme kol} gives the desired result.
\end{proof}

Now we prove Lemma \ref{lemme kol}.

\begin{proof}[Proof of Lemma \ref{lemme kol}]
Define the empirical distribution function of $\pi_n$, namely, for all $i \in \{1,\dots,n\}$: 
$$
F_n(i) := \frac{1}{n} \sum_{j=1}^n \mathds{1}_{\pi_n(j) \leq i}. 
$$
As suggested in \cite{diaconis} we can use a result of Bobkov \cite[Theorem 1.1]{bobkov}. The sequence $(\pi_n(1),\dots,\pi_n(n))$ is an exchangeable extension of $(\pi_n(1),\dots,\pi_n(k))$ meaning that the distribution of $(\pi_n(1),\dots,\pi_n(n))$ stays the same after any permutation. So by Theorem 1.1 of \cite{bobkov} we have that
$$
\max_{1 \leq i_1\dots i_k \leq n}\left| \mathbb{P}(\pi_n(1)\leq i_1,\dots,\pi_n(k)\leq i_k) - \mathbb{E}[ F_n(i_1)\cdots F_n(i_k) ] \right| \leq C\frac{k}{n}
$$
where $C$ is a universal constant. Using Proposition \ref{reformulating prop} and Corollary \ref{corner stone} we find
$$
\mathbb{E}[ F_n(i_1)\cdots F_n(i_k) ] = \frac{1}{n^k} \mathbb{E}_n [ (S_{i_1}+i_1)\cdots(S_{i_k}+i_k) ].
$$
Indeed, Proposition \ref{reformulating prop} implies that $(F_n(1),\dots,F_n(n))$ has the same distribution as $(G_n(1),\dots,G_n(n))$ with
$$
G_n(i) := \frac{1}{n}\sum_{j=1}^n\mathds{1}_{r(j,T_{n+1})\leq i}
$$
where $T_{n+1}$ is a uniform random tree of $\mathfrak{C}_{n+1}$. Then, Corollary \ref{corner stone} shows that 
$$
G_n(i) = \frac{1}{n} \# \{ 1 \leq j \leq n: r(j,T_{n+1}) \leq i \} \stackrel{(d)}{=} \frac{1}{n}(S_i+i)
$$
jointly for $i \in \{1,\dots,n\}$. This concludes the proof.

\end{proof}

\section{Sum and maximum of the first parking places}

We begin this section with the proof of Corollary \ref{cor of main thm}. Then we finish by proving Proposition \ref{prop extension max}.

\begin{proof}[Proof of Corollary \ref{cor of main thm}]
(i) Recall that $(U_n(i))_{1 \leq i \leq n}$ are i.i.d. uniformly distributed in $\llbracket 1,n \rrbracket$. By the central limit theorem, the convergence
$$
\sqrt{\frac{12}{k_n}}\left(\frac{U_n(1)+\dots+U_n(k_n)}{n} - \frac{k_n}{2}\right) \longrightarrow \mathcal{N}(0,1)
$$
holds in distribution. Using the first item of Theorem \ref{main thm} we deduce that the total variation distance between the distributions of $U_n(1)+\dots+U_n(k_n)$ and $\pi_n(1)+\dots+\pi_n(k_n)$ tends to 0. Thus the above convergence still holds when $U_n(i)$ is replaced with $\pi_n(i)$.

(ii) Let $x>0$, then
$$
\mathbb{P} \left[ k_n\left( 1-\frac1n \max \{ U_n(1),\dots,U_n(k_n) \} \right) \geq x \right] = 0 \vee \frac1n \left\lfloor n\left(1-\frac{x}{k_n}\right) \right\rfloor^{k_n} \xrightarrow[n \to \infty]{} e^{-x}.
$$
Using Theorem \ref{main thm} (ii) we deduce that the above convergence still holds when $U_n(i)$ is replaced with $\pi_n(i)$.
\end{proof}

\begin{proof}[Proof of Proposition \ref{prop extension max}]
In this proof we write $k$ instead of $k_n$ to make notation lighter. For every $a \geq 0$:
\begin{equation}
\label{eq preuve max}
\mathbb P (\pi_n(1),\dots,\pi_n(k) \leq n-a) = \frac{(n-k)!}{n!}\mathbb E_n \left[ (S_{n-a}+n-a)_k \right].    
\end{equation}
Indeed, following the same computation as in the proof of Proposition \ref{reformulating prop}, we have
\begin{align*}
\mathbb{P}(\pi_n(1),\dots,\pi_n(k) \leq n-a) & = \frac{(n-k)!}{n!} \sum_{\sigma \in \mathfrak{S}(k,n)} \mathbb{P}(\pi_n(\sigma(1)),\dots,\pi_n(\sigma(k)) \leq n-a) \\
& = \frac{(n-k)!}{n!} \mathbb{E} \left[ \sum_{\sigma \in \mathfrak{S}(k,n)} \mathds{1}_{r(\sigma(1),T_{n+1}),\dots,r(\sigma(k),T_{n+1}) \leq n-a} \right] \\
& = \frac{(n-k)!}{n!} \mathbb{E} \left[ (X_1+\cdots+X_{n-a})_k \right] \\
\end{align*}
which leads to (\ref{eq preuve max}) since $X_1+\cdots+X_{n-a} = S_{n-a}+n-a$. Let $\tau_n$ be a Bienaymé-Galton-Watson tree with a critical Poisson offspring distribution $\mu$ conditionned on having $n$ vertices and define $S^n$ the associated \L ukasiewicz path. More precisely, if $v_1,\dots,v_n$ are the vertices of $\tau_n$ ordered according to the lexicographic order (see e.g. \cite[Section 1.1]{legall}), then for all $0 \leq k \leq n$:
$$
S^n_k := \# \left\{ e : e \text{ is an edge adjacent to a vertex } v_i \text{ with } i \leq k \right\} - k.
$$
In the previous definition, $S^n_k$ is deduced from the first $k$ vertices $v_1,\dots,v_k$ but it is possible to see $S^n_k$ in terms of the last $n-k$ vertices $v_{k+1},\dots,v_n$:
$$
S^n_k = n-1-k-\# \left\{ e : e \text{ is an edge between two vertices } v_i \text{ and } v_j \text{ with } i,j > k \right\}.
$$
It is known that $S^{n+1}$ and $S$ under $\mathbb P_n$ have the same distribution. Thus, equality (\ref{eq preuve max}) can be rewritten in the following way
\begin{equation}
\label{eq bis preuve max}
\mathbb P (\pi_n(1),\dots,\pi_n(k) \leq n-a) = \frac{(n-k)!}{n!}\mathbb E \left[ (S_{n-a}^{n+1}+n-a)_k \right].    
\end{equation}
Let $\tau^*$ be the so-called Kesten's tree associated with $\mu$ (see e.g. \cite[Section 2.3]{abraham}). Denote by $\preceq$ the lexicographic order on the set of vertices of $\tau^*$. It is always possible to find a unique infinite sequence $u_1,u_2,\dots$ of distinct vertices of $\tau^*$ such that for all $i \geq 1$, $\{u : u \text{ is a vertex of } \tau^* \text{ such that } u_i \preceq u \} = \{ u_1,\dots,u_i\}$. In other words, $u_1,u_2,\dots$ are the last vertices of $\tau^*$ for the lexicographic order, which, necessarily, lay on the right of the infinite spine. Similarly to the \L ukasiewicz path we can define the quantity
$$
S_a^* := a-\# \left\{ e : e \text{ is an edge between two vertices } u_i \text{ and } u_j \text{ with } i,j \leq a+1 \right\}.
$$
It is known that $\tau_n$ converges in distribution, for the local topology, towards $\tau^*$ (see e.g. \cite[Section 3.3.5]{abraham}). Making use of Skorokhod's representation theorem, suppose that the latter convergence holds almost surely. Thus, $S^{n+1}_{n-a}$ converges almost surely towards $S_a^*$. Consequently, the convergence
$$
\frac{(n-k)!}{n!} (S_{n-a}^{n+1}+n-a)_k = \frac{(n-k)!}{(n-k+S_{n-a}^{n+1}-a)!} \frac{(n+S_{n-a}^{n+1}-a)!}{n!} \sim (n-cn)^{a-S_a^*}\frac{1}{n^{a-S_a^*}} \longrightarrow (1-c)^{a - S_a^*}
$$
holds almost surely. Since the above sequence is bounded by 1 we deduce that the convergence of the expectation holds which concludes the proof.
\end{proof}

\section*{Acknowledgements}
I am really grateful to Igor Kortchemski for useful suggestions and the careful reading of the manuscript.

\bibliographystyle{alpha}
\bibliography{biblio}

\end{document}